\newtheorem{thm}{Theorem}[section]
\newtheorem{cor}[thm]{Corollary}
\newtheorem{lem}[thm]{Lemma}
\newtheorem{pro}[thm]{Proposition}
\theoremstyle{definition}
\newtheorem*{rem}{Remark}
\numberwithin{equation}{section}
\newcommand{\X}{\mathbb{X}}
\newcommand{\re}{\textup{Re}}
\newcommand{\im}{\textup{Im}}
\newcommand{\B}{\mathcal B}
\newcommand{\Lv}{\mathbf{L}}
\newcommand{\ub}{\mathbf{u}}
\newcommand{\vb}{\mathbf{v}}
\newcommand{\xb}{\mathbf{x}}
\newcommand{\yb}{\mathbf{y}}
\newcommand{\zb}{\mathbf{z}}
\newcommand{\kb}{\mathbf{k}}
\newcommand{\lb}{\mathbf{l}}
\newcommand{\me}{\textup{meas}}
\newcommand\be{\begin{equation}}
\newcommand\ee{\end{equation}}
\def\u{\mathbf u}
\def\v{\mathbf v}
\def\E{\mathbb E}
 \def\rt{ {\mathcal{R}_T}}
 \def\Kcal{  \mathcal{K}}
 \def\Hcal{  \mathcal{H}}
 \def\Phrand{ \Phi_T^{\mathrm{rand}}}
 \def\Phrhat{\widehat{\Phi}_T^{\mathrm{rand}}}
\newcommand{\newabstract}[1]{%
  \par\bigskip
  \csname otherlanguage*\endcsname{#1}%
  \csname captions#1\endcsname
  \item[\hskip\labelsep\scshape\abstractname.]
}
\begin{document}

\baselineskip=17pt

\title[Central Limit Theorem of $L$-functions]{Selberg's central limit theorem of $L$-functions near the critical line}

\author[Yoonbok Lee]{Yoonbok Lee }
\address{Department of Mathematics \\ Research Institute of Basic Sciences \\ Incheon National University \\ 119 Academy-ro, Yeonsu-gu, Incheon, 22012 \\ Korea}
\email{leeyb@inu.ac.kr, leeyb131@gmail.com}

\date{\today}

\begin{abstract} 
We find an asymptotic expansion of a multi-dimensional version of Selberg's central limit theorem for $L$-functions on $ \sigma= \frac12 + ( \log T)^{-\theta}$ and $ t \in [ T, 2T]$, where $ 0 < \theta < \frac12 $ is a constant. 
\end{abstract}

\keywords{Central limit theorem, joint distribution of $L$-functions}

\subjclass[2010]{11M41.}

\maketitle

\section{Introduction}\label{Introduction}

 Selberg's central limit theorem says that the function 
$$ \frac{ \log \zeta( \sigma+it) }{ \sqrt{ \pi \sum_{p < t } p^{- 2 \sigma}}} $$
has a  Gaussian distribution in the complex plane for $ \frac12 \leq \sigma \leq \sigma_T (\theta) $, where
$$   \sigma_T := \sigma_T ( \theta) :=   \frac12 + \frac{1}{ ( \log T)^\theta} $$
for $ \theta > 0 $ throughout the paper. See \cite[Theorem 6.1]{Ts} for a proof and \cite{RS} for a simple proof for the real part. It also holds for other $L$-functions. See \cite[Theorem 2]{Sel} for a general statement.

When $\sigma = \sigma_T $ and $ T \leq t \leq 2T$, we have more precise estimations for the distribution of $ \log \zeta( \sigma + i t)$ in \cite{HL} and \cite{Le5} as follows.
\begin{thm}\cite[Theorem 1.2 and Lemma 2.3]{Le5} \label{thm 1}
Let $ 0 < \theta < \frac12$, $ a<b$ and $ c<d$ be real numbers. There exist constants $ \epsilon, \kappa > 0$ and a sequence $ \{ d_{k,\ell} \}_{k, \ell \geq 0 }$ of real numbers such that
\begin{multline}\label{eqn zeta asymp}
\frac1T \me\{ t \in [T, 2T] : \frac{ \log \zeta ( \sigma_T + it )}{ \sqrt{ \pi \psi_T}} \in [a,b] \times [c,d] \} \\
= \sum_{ k + \ell \leq \epsilon \psi_T } \frac{ d_{k , \ell} }{ \sqrt{ \psi_T}^{k + \ell}}  \int_a^b e^{- \pi  u^2 } \Hcal_k ( \sqrt{ \pi }u ) du \int_c^d  e^{- \pi  v^2 }   \Hcal_\ell ( \sqrt{ \pi } v )  dv + O \bigg(  \frac{1}{ ( \log T)^\kappa}\bigg) 
\end{multline}
as $ T \to \infty$, where $\mathrm{meas}$ denotes the Lebesgue measure on $\mathbb{R}$, 
$$ \psi_T := \sum_p \sum_{k \geq 1}  \frac{1}{ k^2 p^{2 k \sigma_T}} $$
 and $ \Hcal_n ( x) $ is the $n$-th Hermite polynomial defined by 
\begin{equation}\label{def Hcal}
\Hcal_n (x) := (-1)^n e^{x^2 } \frac{ d^n}{dx^n} ( e^{-x^2 }) .
\end{equation}
Moreover, $d_{0,0} = 1 $, $ d_{k,\ell}= 0 $ for $ k+\ell = 1,2$ and $ d_{k, \ell} = O( \delta_0^{-k-\ell})$ for some $ \delta_0 > 0 $ and all $ k, \ell$.
\end{thm}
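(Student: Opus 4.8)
The plan is to prove \eqref{eqn zeta asymp} by Selberg's moment method, sharpened to yield an Edgeworth/Gram--Charlier expansion in place of the bare Gaussian limit. The first step is the reduction to a short Dirichlet polynomial. Put $X = T^{1/\log\log T}$ and
\[
 P(t) := \sum_{\substack{p^k \le X \\ k \ge 1}} \frac{1}{k\, p^{k(\sigma_T + it)}},
\]
and prove that there are constants $A, \kappa_1 > 0$ with
\[
 \me\Big\{ t \in [T,2T] : \big|\log\zeta(\sigma_T+it) - P(t)\big| > (\log T)^{-A} \Big\} \ll T(\log T)^{-\kappa_1}.
\]
This is the only genuinely arithmetic input. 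It comes from a quantitative form of Selberg's argument: a zero-density bound for $\zeta$ in $\re s \ge \tfrac12$, the resulting sparsity of zeros within $O(1/\log T)$ of the line $\re s = \sigma_T$, a contour-integral expression for $\log\zeta - P$ in terms of those nearby zeros together with a primes-up-to-$X$ mollifier, and power-moment bounds for $\log\zeta$ and $1/\zeta$ in the strip $\re s \ge \sigma_T$. The hypothesis $\sigma_T - \tfrac12 = (\log T)^{-\theta}$ with $\theta < \tfrac12$ is precisely what keeps the relevant zeros sparse enough to turn the usual $o(1)$ into a power saving in $\log T$; this step is also where one must be careful about the branch of $\log\zeta$ near zeros lying on the contour.

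Next I would compute the joint moments of $P$. Fix a small $\epsilon > 0$. For all $a, b \ge 0$ with $a + b \le \epsilon\psi_T$,
\[
 \frac1T\int_T^{2T} (\re P(t))^a (\im P(t))^b\, dt = \E\big[(\re\mc P)^a (\im\mc P)^b\big] + O(T^{-\eta})
\]
for some $\eta = \eta(\epsilon) > 0$, where $\mc P := \sum_{p^k \le X}\X_p^{\,k}/(k\, p^{k\sigma_T})$ with the $\X_p$ independent and uniform on the unit circle. The error is the total contribution of the off-diagonal frequencies $\prod p_i^{-it}\prod q_j^{\,it}$ with $\prod p_i \ne \prod q_j$: each such integral is $\ll X^{a+b}$, there are $\ll X^{a+b}$ of them, so altogether $\ll X^{2(a+b)}/T \ll T^{-\eta}$ once $\epsilon$ is small. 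One then evaluates the right-hand side by independence: $\re\mc P$ and $\im\mc P$ each have variance $\tfrac12\psi_T + O(1)$, are asymptotically uncorrelated, and — because the weights $p^{-k\sigma_T}$ decay fast enough that $\sum_p p^{-n\sigma_T} = O(1)$ for $n \ge 3$ — have joint cumulants of order $n \ge 3$ that are $O_n(1)$. So after dividing by $\sqrt{\pi\psi_T}$ the pair is a standard $\pi$-normalized bivariate Gaussian, density $e^{-\pi(u^2+v^2)}$, perturbed by cumulants of size $O(\psi_T^{-n/2})$ at each order $n \ge 3$.

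Finally I would assemble these asymptotics through the Edgeworth formalism. The cumulant generating function of $(\re P, \im P)/\sqrt{\pi\psi_T}$ equals
\[
 -\frac{|\xi|^2}{4\pi} + \sum_{n \ge 3}\frac{Q_n(\xi)}{(\sqrt{\psi_T})^{\,n}} + (\text{negligible}),
\]
where $Q_n$ is a fixed polynomial determined by the $n$-th cumulants of $\mc P$. Exponentiating, regrouping by powers of $1/\sqrt{\psi_T}$, and inverting the Fourier transform — which sends $\xi_1^k\xi_2^\ell e^{-|\xi|^2/(4\pi)}$ to a constant multiple of $\Hcal_k(\sqrt{\pi} u)\Hcal_\ell(\sqrt{\pi} v)\,e^{-\pi(u^2+v^2)}$ by \eqref{def Hcal} — produces exactly the sum in \eqref{eqn zeta asymp}, with each $d_{k,\ell}$ a universal polynomial in the cumulants. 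The normalization gives $d_{0,0} = 1$; the near-vanishing of the mean and the fact that the variance has been normalized to the Gaussian value force $d_{k,\ell} = 0$ for $k + \ell = 1, 2$; and a Bell-polynomial-type bound for the cumulant polynomials, together with the $O_n(1)$ estimates from the previous step, yields $d_{k,\ell} \ll \delta_0^{-k-\ell}$ for a fixed $\delta_0 > 0$. Truncating the series at $k + \ell > \epsilon\psi_T$ costs only $O((\log T)^{-\kappa_2})$ by the same growth bound; combining with the exceptional-set bound of the first step (which lets one replace $\log\zeta$ by $P$ up to a boundary error $O((\log T)^{-\kappa_1})$) and setting $\kappa = \min(\kappa_1, \kappa_2)$ gives \eqref{eqn zeta asymp}.

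The main obstacle is the first step: extracting a genuine power-saving bound for the exceptional set, uniformly as $\sigma_T \to \tfrac12$ at rate $(\log T)^{-\theta}$, which forces one to track every error through the zero counting, the mollifier, and the moment estimates rather than settle for $o(1)$. The remaining two steps are essentially mechanical once the moments are available, although justifying the truncation of the Edgeworth series at order $\epsilon\psi_T$ still requires some care.
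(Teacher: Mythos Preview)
The paper does not prove Theorem~\ref{thm 1} --- it is quoted from \cite{Le5}. What the paper proves is the multivariate generalization, Theorem~\ref{thm 2}, and that proof is structurally the same as your outline: reduce $\log L_j(\sigma_T+it)$ to its random model (imported from \cite[Theorem 2.3]{LL}, which uses the zero-density estimate exactly as you describe), write the characteristic function $\Phrhat$ as an Euler product, separate the quadratic Gaussian piece $Q_T$ from the higher cumulant terms $I_n$, bound $|I_n(\zb)|\le C^n\|\zb\|^n$ (Lemma~\ref{lemma asymp InsT}), exponentiate and truncate (Proposition~\ref{prop 2}), and Fourier-invert to get the Hermite expansion of the density (Proposition~\ref{prop}).

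The one place your route differs is where the Fourier analysis lives. You pass through the finite Dirichlet polynomial $P(t)$, match its moments to the random sum $\mc P$, and then run Edgeworth on the empirical distribution of $P$. The paper instead compares distributions once and for all ($\Phi_T=\Phrand+O((\log T)^{(\theta-1)/2}\log\log T)$) and does all Fourier analysis on the random model, where $\Phrhat$ is an honest Euler product and its decay for large $\|\zb\|$ is available from \cite[Lemma~7.1]{LL}. Your version has a soft spot at ``inverting the Fourier transform'': controlling $\frac1T\int_T^{2T}e^{i\xi\cdot P(t)}\,dt$ for large $\xi$ does not follow from moment matching alone, and some device (Beurling--Selberg majorants for the box, or simply shifting the Edgeworth analysis to $\mc P$ as the paper does) is needed there. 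Otherwise the plan is sound; in particular your reason for $d_{k,\ell}=0$ when $k+\ell=2$ (the normalizer $\psi_T$ is \emph{exactly} the variance of the random model) is correct, and the paper explicitly notes that this fails in Theorem~\ref{thm 2} because $\psi_{j,T}$ is only asymptotically the variance.
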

The leading term of the expansion in \eqref{eqn zeta asymp} is 
$$ \int_a^b e^{- \pi  u^2 }   du \int_c^d  e^{- \pi  v^2 }     dv ,$$
which is   Gaussian, and the lower order terms may be evaluated using
$$ \int_a^b e^{- \pi u^2 } \Hcal_k ( \sqrt{ \pi}u) du =    \frac{-1}{ \sqrt{ \pi}} \bigg( e^{ - \pi b^2 } \Hcal_{k-1} ( \sqrt{ \pi} b ) - e^{- \pi a^2 } \Hcal_{k-1} ( \sqrt{ \pi} a ) \bigg) $$
for $ k \geq 1  $. Note that the sequence $\{ d_{k,  \ell}  \} $ is defined by the generating series (2.19) in \cite{Le5} and 
$ \psi_T = \theta \log \log T + O(1) $ by the prime number theorem.
It might be interesting to compare the asymptotic expansion in \eqref{eqn zeta asymp} with an Edgeworth expansion in the probability theory. See \cite[Chapter 7]{C} for more information. 

In this paper, we    generalize Theorem \ref{thm 1} to a multi-variate setting for the $L$-functions $L_1$, \ldots, $L_J$ satisfying the following assumptions:
\begin{enumerate}
\item[A1:] (Euler product)  For $ j = 1, \ldots , J $ and $\re(s)>1$ we have 
$$ L_j  ( s) = \prod_p \prod_{i=1}^d \bigg(   1 -   \frac{ \alpha_{j,i}(p)}{p^s} \bigg)^{-1} , $$
where $ | \alpha_{j,i} (p)  | \leq p^{\eta}$ for some fixed $ 0 \leq \eta < \frac12 $ and for every $ i = 1, \ldots , d.$

 \item[A2:] (Analytic continuation) For $ j= 1, \ldots, J$, each $L_j$ has an analytic continuation to $\mathbb{C}$ except for a finite number of poles on $\re(s) = 1$. 

\item[A3:] (Functional equation) The functions $L_1, L_2, \dots, L_J$ satisfy the same functional equation
  $$ \Lambda_j(s) = \omega \overline{ \Lambda_j( 1- \bar{s})} ,$$  
where
$$ \Lambda_j(s) := L_j(s) Q^s \prod_{\ell=1}^k \Gamma ( \lambda_{\ell} s+\mu_{\ell} ) , $$  
$ | \omega| =1 $, $Q>0$, $ \lambda_{\ell}>0 $ and $\mu_{\ell} \in \mathbb{C} $ with $ \re ( \mu_{\ell} ) \geq 0 $.
\item[A4:] (Ramanujan hypothesis on average)
$$ \sum_{ p \leq x } \sum_{i=1}^d | \alpha_{j,i }(p) |^2 = O( x^{1+\epsilon})$$
holds  for every $ \epsilon>0$ and for every $ j = 1, \ldots , J $ as $ x \to \infty $.

\item[A5:] (Zero density hypothesis) Let $N_{f } ( \sigma, T )$ be the number of zeros of $f (s)$ in $\re(s) \geq \sigma$ and $ 0 \leq \im(s) \leq T$. Then there exist positive constants $\kappa_1, \kappa_2$ such that for every  $j= 1, \ldots, J$ and all $\sigma\geq  \frac12 $ we have 
$$   N_{L_j } ( \sigma, T ) \ll T^{1 - \kappa_1(\sigma -  \frac12 ) } (\log T)^{\kappa_2}.$$

\item[A6:] (Selberg orthogonality conjecture) By assumption A1 we can write 
 $$ \log L_j(s) = \sum_p \sum_{k=1}^\infty   \frac{ \beta_{L_j} (p^k)}{ p^{ks}}  . $$
Then for all $1\leq j, k \leq J$, there exist constants $ \xi_j >0$ and $ c_{j,k}$ such that
$$  \sum_{p \leq x} \frac{ \beta_{L_j}(p)  \overline{\beta_{L_k}(p) } }{p} = \delta_{j,k} \xi_j \log \log x + c_{j,k} + O \bigg( \frac{1}{ \log x} \bigg),$$ 
  where $ \delta_{j,k} = 0 $ if $ j \neq k $ and $ \delta_{j,k} = 1 $ if $ j = k$. 

\end{enumerate}
The assumptions A1--A6 are standard and expected to hold for all $L$-functions arising from automorphic representation for $GL(n)$. In particular, they are verified by $GL(1)$ and $GL(2)$ $L$-functions, which are the Riemann zeta function, Dirichlet $L$-functions, $L$-functions attached to Hecke holomorphic or Maass cusp forms. Assumption A5 is weaker than the Riemann hypothesis, but it is strong enough to find a short Dirichlet approximation to each $\log L_j (\sigma_T + it )$ for almost all $ t \in [T, 2T]$. For example, see \cite[Lemma 4.2]{LL} for a proof. Assumption A6 insures the statistical independence of the $\log L_j ( \sigma_T + it ) $ for $ j = 1, \ldots , J$. 

Assuming assumptions A1--A6 for $L_1, \ldots, L_J$, we want to find an asymptotic expansion for
\begin{equation}\label{eqn joint dist log Lj}
 \frac1T \me\{ t \in [T, 2T] : \frac{ \log L_j  ( \sigma_T + it )}{ \sqrt{ \pi \psi_{j,T}}} \in [a_j ,b_j] \times [c_j,d_j]  \mathrm{~for~all~} j = 1, \ldots , J \},
 \end{equation}
 where
 \begin{equation}\label{def psijT}
  \psi_{j,T} := \xi_j \theta \log \log T  
  \end{equation}
with the constants $\xi_j$ in assumption A6 and $a_j, b_j, c_j, d_j$ are real numbers for all $j = 1 , \ldots, J$. Let
$$ \Lv(s) : =\Big(\log |L_1(s)|, \dots, \log |L_J(s)|, \arg L_1(s), \dots, \arg L_J(s) \Big) $$
and 
$$  \rt := \prod_{j=1}^J [ a_j \sqrt{ \pi \psi_{j, T}} , b_j  \sqrt{ \pi \psi_{j, T}}]  \times \prod_{j=1}^J [ c_j \sqrt{ \pi \psi_{j, T}} , d_j  \sqrt{ \pi \psi_{j, T}}] , $$
then  \eqref{eqn joint dist log Lj} equals to  
$$ \Phi_T (\rt ):= \frac1T \mathrm{meas}  \{ t\in[T, 2T] :  \Lv (\sigma_T +it )\in \rt  \} .$$ 
\begin{thm}\label{thm 2}
Let $ 0< \theta < \frac12$. Assume assumptions A1--A6 for $L_1, \ldots, L_J $. Then there exist  constants $\epsilon, \kappa > 0$ and a sequence  $ \{ b_{\kb, \lb}\}$ of real numbers such that
\begin{multline}\label{eqn thm 2}
   \Phi_T  ( \rt )     =     \sum_{   \Kcal( \kb+\lb) \leq \epsilon \log\log T  }       b_{\kb, \lb}              \prod_{j=1}^J     \frac{1}{  \sqrt{\psi_{j,T} }^{k_j + \ell_j  }   }   \\
   \times  \prod_{j=1}^J \bigg(  \int_{a_j}^{b_j}  e^{ - \pi u^2 } \Hcal_{k_j } ( \sqrt{\pi}u )du      \int_{c_j}^{d_j}  e^{ - \pi v^2 } \Hcal_{\ell_j } ( \sqrt{\pi}v )dv     \bigg)    
     + O \bigg(   \frac{1}{ ( \log T)^\kappa} \bigg) ,
 \end{multline}
where $ \kb = ( k_1 , \ldots, k_J) $ and $ \lb = ( \ell_1 , \ldots, \ell_J)$ are vectors in $(\mathbb{Z}_{\geq 0})^J $ and $\Kcal(\kb  ) := k_1  + \cdots +k_J   $. 
Moreover, $b_{0,0}= 1 $, $ b_{\kb, \lb} = 0 $ if $\Kcal(\kb + \lb)  = 1$ and $b_{\kb + \lb } = O(  \delta_0^{-\Kcal(\kb+\lb)})$ for some $ \delta_0 > 0$ and all $ \kb, \lb$.
\end{thm}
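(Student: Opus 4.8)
The plan is to reduce the multi-variate statement of Theorem \ref{thm 2} to an analysis of the joint moment generating function (or characteristic function) of the vector $\Lv(\sigma_T+it)$, following the same architecture as the one-variable case behind Theorem \ref{thm 1}. First I would use assumption A4 together with the standard zero-density argument (as in \cite[Lemma 4.2]{LL}) to replace each $\log L_j(\sigma_T+it)$, for $t$ outside an exceptional set of measure $O(T(\log T)^{-\kappa})$, by a short Dirichlet polynomial $P_{j,X}(t) := \sum_{p\le X}\beta_{L_j}(p) p^{-\sigma_T-it} + (\text{prime power terms})$ with $X$ a small power of $\log T$. The prime-power contribution is absorbed into lower-order error after truncation, so the problem becomes understanding the joint distribution of the $2J$-dimensional real random vector formed from the real and imaginary parts of the $P_{j,X}(t)$, normalized by $\sqrt{\pi\psi_{j,T}}$.

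Next, I would compute the joint moments $\frac1T\int_T^{2T}\prod_{j}(\re P_{j,X})^{m_j}(\im P_{j,X})^{n_j}\,dt$ by expanding and using the orthogonality $\frac1T\int_T^{2T} p^{-it}\overline{q^{-it}}\,dt = \delta_{p,q} + O(\text{small})$ for $p,q\le X$, valid since $X$ is a tiny power of $\log T$. The diagonal terms are governed precisely by assumption A5: the Selberg orthogonality conjecture gives $\sum_{p\le X}\beta_{L_j}(p)\overline{\beta_{L_k}(p)}/p^{2\sigma_T-1}\cdot(\text{weight}) = \delta_{j,k}\xi_j\theta\log\log T + O(1) = \delta_{j,k}\psi_{j,T}+O(1)$, which both produces the Gaussian main term with the correct normalization and, crucially, forces the cross-covariances between distinct indices $j\ne k$ to be $O(1)$, hence negligible after normalization. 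This yields that the joint moments agree, up to the stated error, with those of $2J$ independent Gaussians, and the correction terms $b_{\kb,\lb}$ arise exactly as in the one-variable case from the subleading contributions in A5 and from the higher cumulants of the Dirichlet polynomial; the generating-series description analogous to (2.19) of \cite{Le5} determines them and gives the bounds $b_{\bf 0,\bf 0}=1$, vanishing for $\Kcal(\kb+\lb)=1$, and $b_{\kb,\lb}=O(\delta_0^{-\Kcal(\kb+\lb)})$.

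Finally, I would convert the moment (or Laplace-transform) information back into the measure estimate \eqref{eqn thm 2}. Because each $\log L_j$ ranges over a product of intervals $[a_j,b_j]\times[c_j,d_j]$ independently, the target is a product over $j$ of one-dimensional smoothed indicator integrals; writing the smoothed indicator of a box as a contour integral of its Laplace transform and plugging in the moment asymptotics produces, term by term, the Hermite-polynomial integrals $\int_{a_j}^{b_j}e^{-\pi u^2}\Hcal_{k_j}(\sqrt\pi u)\,du$ and $\int_{c_j}^{d_j}e^{-\pi v^2}\Hcal_{\ell_j}(\sqrt\pi v)\,dv$ with coefficients $b_{\kb,\lb}\prod_j\psi_{j,T}^{-(k_j+\ell_j)/2}$. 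The truncation $\Kcal(\kb+\lb)\le\epsilon\log\log T$ is dictated by the radius of convergence coming from the $O(\delta_0^{-\Kcal(\kb+\lb)})$ bound together with $\psi_{j,T}\asymp\log\log T$, exactly as in Theorem \ref{thm 1}. The main obstacle is the uniformity in the multi-variate moment computation: one must control the joint moments for all $\m,\n$ simultaneously up to order a constant times $\log\log T$, tracking the error terms in both the orthogonality relation for $p^{-it}$ and in assumption A5 through the full combinatorial expansion, and verifying that the off-diagonal ($j\ne k$) contributions really do stay $O(1)$ uniformly — this is where assumption A5 is used most delicately and where the bookkeeping is heaviest. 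Smoothing the sharp indicator of the box $\rt$ without destroying the power-saving error $O((\log T)^{-\kappa})$, already a subtle point in the one-dimensional argument, requires a careful choice of mollifier depending on $\theta$ and is the other place where care is needed.
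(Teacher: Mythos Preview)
Your outline is a plausible route to Theorem \ref{thm 2}, but it differs substantially from the paper's argument. The paper does not compute the $t$-moments of the Dirichlet polynomials directly. Instead, it first invokes \cite[Theorem 2.3]{LL} to obtain
\[
\Phi_T(\rt) = \Phrand(\rt) + O\big((\log T)^{(\theta-1)/2}\log\log T\big),
\]
where $\Phrand$ is the law of the random model $\Lv(\sigma_T,\X)$ built from i.i.d.\ variables $\X(p)$ uniform on the unit circle. This single citation absorbs the entire ``zero-density $\Rightarrow$ Dirichlet polynomial $\Rightarrow$ moment matching'' machinery that you describe in your first two paragraphs. The remainder of the paper is then a purely probabilistic analysis of $\Phrand$: because the $\X(p)$ are exactly independent, the characteristic function $\Phrhat$ factors \emph{exactly} as $\prod_p(1+R_{p,\sigma_T}(\zb))$, with no error term at this stage. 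Expanding $\log$ of each local factor produces the homogeneous polynomials $I_{n,\sigma_T}(\zb)$; Lemma \ref{lemma asymp InsT} (where A5 enters) shows that $I_{2,\sigma_T}$ supplies the Gaussian quadratic form $Q_T(\zb)$ while the $I_{n,\sigma_T}$ for $n\ge 3$ are uniformly $O(C^n\|\zb\|^n)$. The coefficients $b_{\kb,\lb}$ are then \emph{defined} as the Taylor coefficients of $\exp(\sum_{n\ge 2} I_n(\zb))$ via \eqref{def bkl}, and Fourier inversion of $\Phrhat$ yields the density $H_T(\ub,\vb)$ in Hermite form (Proposition \ref{prop}); integrating over $\rt$ gives Theorem \ref{thm 3} and hence Theorem \ref{thm 2}.

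Compared with your proposal, the paper's route buys exactness in the product over primes and thereby sidesteps the ``uniformity in the multi-variate moment computation'' that you correctly flag as the main obstacle: that uniformity is needed only once, inside the black box \cite[Theorem 2.3]{LL}, and not throughout the expansion. Your direct-moments route should still succeed but intertwines the $t$-averaging errors with the cumulant analysis, which is heavier bookkeeping. One slip to correct: the Dirichlet polynomial length $X$ cannot be ``a small power of $\log T$''. To recover $\psi_{j,T}\sim\xi_j\theta\log\log T$ one already needs $X\ge\exp((\log T)^\theta)$, and for the orthogonality of $p^{-it}$ to survive products of order $\epsilon\log\log T$ one takes $X=T^{o(1)}$ (e.g.\ $T^{1/(\log\log T)^2}$), not a power of $\log T$.
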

Theorem \ref{thm 2} will be proved in the beginning of Section \ref{sec 2}. Theorem \ref{thm 2} is essentially the same as Theorem 2.1 in \cite{He1}, but it looks that the expansion in Theorem \ref{thm 2} is longer. Moreover, since the paper \cite{He1} contains only a sketched proof, our proof should be useful.

Unlike $d_{k,\ell} $ in Theorem \ref{thm 1}, $b_{\kb, \lb } $ in Theorem \ref{thm 2} may not be zero for $ \Kcal(\kb + \lb ) = 2 $. One reason is that $\psi_T$ in Theorem \ref{thm 1} and $ \psi_{j, T}$ in Theorem \ref{thm 2} are different up to a constant order, even though they are asymptotically same. Moreover, when $ J > 1 $, there are additional terms  essentially   from the constants $c_{j,k}$ in assumption A6. 

Since the leading term in \eqref{eqn thm 2} is  Gaussian and the other nonvanishing terms are 
$ O \big( \frac{1}{ \log \log T} \big)$, we obtain the following corollary.
\begin{cor}
Let $0 < \theta < \frac12 $. Assume assumptions A1--A6 for $L_1, \ldots, L_J$. Then we have
$$    \Phi_T  ( \rt )     =          \prod_{j=1}^J  \bigg(    \int_{a_j}^{b_j}  e^{ - \pi u^2 }  du      \int_{c_j}^{d_j}  e^{ - \pi v^2 }  dv     \bigg)         + O \bigg(   \frac{1}{ \log \log T  } \bigg) . $$
\end{cor}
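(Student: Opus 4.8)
The plan is to deduce the corollary directly from Theorem~\ref{thm 2}, by isolating the leading term of the expansion \eqref{eqn thm 2} and showing that all the remaining terms contribute $O(1/\log\log T)$. The index $\kb=\lb=\mathbf 0$ always lies in the range of summation, and since $b_{\mathbf 0,\mathbf 0}=1$ and $\Hcal_0\equiv 1$ by \eqref{def Hcal}, the corresponding term equals $\prod_{j=1}^J\bigl(\int_{a_j}^{b_j}e^{-\pi u^2}\,du\bigr)\bigl(\int_{c_j}^{d_j}e^{-\pi v^2}\,dv\bigr)$, which is the asserted main term. The indices with $\Kcal(\kb+\lb)=1$ contribute nothing, because $b_{\kb,\lb}=0$ for them. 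Hence it remains to estimate the total contribution of the indices with $2\le\Kcal(\kb+\lb)\le\epsilon\log\log T$.

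First I would bound a single such term. Since $\psi_{j,T}=\xi_j\theta\log\log T$ with $\xi_j,\theta>0$ by \eqref{def psijT}, we have $\prod_{j=1}^J\psi_{j,T}^{-(k_j+\ell_j)/2}\ll(\log\log T)^{-\Kcal(\kb+\lb)/2}$. For the Hermite integrals, the generating identity $\sum_{k\ge 0}\Hcal_k(x)t^k/k!=e^{2xt-t^2}$ yields $\sum_{k\ge 0}\tfrac{t^k}{k!}\int_a^b e^{-\pi u^2}\Hcal_k(\sqrt\pi u)\,du=\int_a^b e^{-\pi(u-t/\sqrt\pi)^2}\,du$, whose modulus is at most $e^{(\im t)^2}$ for every $t\in\mathbb C$; a Cauchy estimate on the circle $|t|=\sqrt{k/2}$ then gives $\bigl|\int_a^b e^{-\pi u^2}\Hcal_k(\sqrt\pi u)\,du\bigr|\le k!\,(2e/k)^{k/2}\ll C_0^{\,k}\sqrt{k!}$ for an absolute constant $C_0$, uniformly in $a,b$ (alternatively, Cram\'er's inequality for Hermite polynomials gives the same). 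Combining these bounds with $|b_{\kb,\lb}|\ll\delta_0^{-\Kcal(\kb+\lb)}$ and $\prod_j k_j!\,\ell_j!\le(\Kcal(\kb+\lb))!$, the term indexed by $(\kb,\lb)$ with $m:=\Kcal(\kb+\lb)$ is $\ll C^{m}\sqrt{m!}\,(\log\log T)^{-m/2}$ for a constant $C$ depending only on $\delta_0,\theta,\xi_1,\dots,\xi_J$.

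To assemble, note that the number of pairs $(\kb,\lb)\in(\mathbb Z_{\ge 0})^{2J}$ with $\Kcal(\kb+\lb)=m$ equals $\binom{m+2J-1}{2J-1}\ll_J m^{2J-1}$, and Stirling gives $\sqrt{m!}\ll m^{1/4}(m/e)^{m/2}$. Hence the contribution of all indices at level $m$ is $\ll_J m^{2J}\bigl(C^2 m/(e\log\log T)\bigr)^{m/2}$, and for $2\le m\le\epsilon\log\log T$ one may split off a single factor to bound this by $(\log\log T)^{-1}\,m^{2J+1}\rho^{(m-2)/2}$ with $\rho:=C^2\epsilon/e$. Choosing $\epsilon$ small enough that $\rho<1$ — which is harmless, since the proof of Theorem~\ref{thm 2} yields the expansion for every sufficiently small $\epsilon>0$, with a corresponding $\kappa$ — the series $\sum_{m\ge 2}m^{2J+1}\rho^{(m-2)/2}$ converges, so the indices with $\Kcal(\kb+\lb)\ge 2$ contribute $O(1/\log\log T)$ in total. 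Adding the $O((\log T)^{-\kappa})$ error of \eqref{eqn thm 2} then gives the corollary.

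The only real obstacle is that the Hermite integrals grow like $\sqrt{m!}$ in the level $m$, so the series in \eqref{eqn thm 2} is of divergent (Edgeworth) type term by term; it is precisely the truncation $\Kcal(\kb+\lb)\le\epsilon\log\log T$ of Theorem~\ref{thm 2}, with $\epsilon$ small, that balances this growth against the gain $(\log\log T)^{-m/2}$ from the denominators $\sqrt{\psi_{j,T}}^{\,k_j+\ell_j}$.
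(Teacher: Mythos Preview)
Your proof is correct and follows the paper's approach: deduce the corollary from Theorem~\ref{thm 2} by isolating the Gaussian leading term and bounding the remaining terms by $O(1/\log\log T)$. The paper itself offers only the one-line remark preceding the corollary (``the other nonvanishing terms are $O(1/\log\log T)$''), so your explicit bound on the Hermite integrals and careful summation over levels $m\ge 2$---together with the legitimate observation, borne out by the proof of Theorem~\ref{thm 2}, that $\epsilon$ may be taken arbitrarily small---supply the details the paper leaves implicit.
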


   \begin{rem}
  The current method is limited to $ 0< \theta < \frac12$, since our proof of Theorem \ref{thm 2} depends on \eqref{eqn discrepancy lemma}. In a forthcoming paper \cite{Le6} we extend Theorem \ref{thm 2} for $\sigma_T $ closer to $ \frac12 $ by proving \eqref{eqn discrepancy lemma} for 
$$  \frac12 + \frac{( \log \log T)^2}{ \log T } \leq    \sigma_T   \leq \frac12 + \frac{1}{\log \log T}.  $$
 A new ingredient therein is the second moment estimation of $ \log L(s)$ instead of approximations to each $ \log L_j (s)$ by Dirichlet polynomials. Further extension toward $ \frac12$ should be more interesting but challenging, since we need to estimate a contribution from the nontrivial zeros of $L(s)$. 
\end{rem}

We will prove theorems and propositions in Section \ref{sec 2} and lemmas in Section \ref{sec 3}. We conclude the introduction with a summary of notations:
\begin{itemize}
\item $ \sigma_T = \sigma_T (\theta) = \frac12 + \frac{1}{ ( \log T)^\theta}$.
\item $\kb=(k_1 , \ldots, k_J) $ and $\lb = ( \ell_1, \ldots , \ell_J) $ are vectors in $( \mathbb{Z}_{\geq 0 })^J$.
\item $ \ub= ( u_1 , \ldots , u_J) $, $\vb=(v_1 , \ldots , v_J) $, $\xb = (x_1 , \ldots , x_J) $ and $\yb= ( y_1 , \ldots, y_J ) $ are vectors in $\mathbb{R}^J$.
\item $ \zb = ( z_1 , \ldots , z_J) = \xb + i \yb $ and $ \bar{\zb} =  ( \overline{z_1}  , \ldots ,  \overline{ z_J} )=\xb - i \yb$ are   vectors in $\mathbb{C}^J$.
\item $\kb! := k_1 ! \cdots k_J! $ and  $\Kcal(\kb) := k_1 + \cdots + k_J $.
\item $ \xb^\kb := x_1^{k_1} \cdots x_J^{k_J} $.
\item $\xb \cdot \ub  = \sum_{j=1}^J x_j u_j $, $ || \zb || = \sqrt{ \sum_{j=1}^J |z_j|^2  } = \sqrt{ \sum_{j=1}^J ( x_j^2 + y_j^2 )} $.
\end{itemize}

\section{Estimates on random model}\label{sec 2}

 We define the random vector
$$ \Lv(\sigma, \X)=\bigg(\log |L_1(\sigma, \X)|, \dots, \log |L_J(\sigma, \X)|, \arg L_1(\sigma, \X), \dots, \arg L_J(\sigma, \X) \bigg)$$
for $ \sigma > \frac12$, where each $L_j ( \sigma, \X)$ is defined by the product
\begin{equation}\label{def Lj random}
 L_j ( \sigma, \X) = \prod_p \prod_{i=1}^d  \bigg( 1 - \frac{ \alpha_{j,i}(p) \X(p) }{p^\sigma} \bigg)^{-1}
 \end{equation}
and $\{ \X (p) \}_p $ is a sequence of independent random variables, indexed by the prime numbers, and uniformly distributed on the unit circle $\{ z \in \mathbb{C} : |z|=1 \}$. The product converges almost surely for $ \sigma > \frac12$ by Kolmogorov's three series theorem. 

Define a probability measure
\be\label{Phi rand def}
\Phrand  (\B ):=
 \mathbb{P}(\Lv(\sigma_T , \X) \in \B) 
 \ee
 for  a Borel set $ \B$ in $\mathbb R^{2J}$. By \cite[Theorem 2.3]{LL}
we have
\begin{equation}\label{eqn discrepancy lemma}
\Phi_T (\rt  ) =  \Phrand (\rt ) +  O(  ( \log T)^{ (\theta - 1)/2}  \log\log T ) 
\end{equation}
 for $0 <  \theta <  \frac12 $. It means that the distribution of  $\Lv ( \sigma_T + it)$ is well approximated by the distribution of its random model $\Lv ( \sigma_T , \X )$ when $0 <  \theta <  \frac12 $. 
 Thus, Theorem \ref{thm 2} is an immediate consequence of the following theorem and \eqref{eqn discrepancy lemma}.
 \begin{thm}\label{thm 3}
Let $  \theta  > 0 $.  Assume assumptions A1--A6 for $L_1, \ldots, L_J $. Then there exist constants $\epsilon, \kappa > 0$ and a sequence  $ \{ b_{\kb, \lb}\}$ of real numbers such that
\begin{multline*} 
   \Phrand  ( \rt )     =     \sum_{ \Kcal( \kb+ \lb ) \leq \epsilon \log \log T   }       b_{\kb, \lb}              \prod_{j=1}^J      \frac{1}{  \sqrt{\psi_{j,T} }^{k_j + \ell_j  }   } \\
   \times \prod_{j=1}^J \bigg(  \int_{a_j}^{b_j}  e^{ - \pi u^2 } \Hcal_{k_j } ( \sqrt{\pi}u )du      \int_{c_j}^{d_j}  e^{ - \pi v^2 } \Hcal_{\ell_j } ( \sqrt{\pi}v )dv     \bigg)   
     + O \bigg(   \frac{1}{ ( \log T)^\kappa} \bigg) .
 \end{multline*}
Moreover, $b_{0,0}= 1 $, $ b_{\kb, \lb} = 0 $ if $\Kcal(\kb + \lb) = 1$ and $b_{\kb + \lb } = O(  \delta_0^{-\Kcal(\kb+\lb)})$ for some $ \delta_0 > 0$ and all $ \kb, \lb$.
\end{thm}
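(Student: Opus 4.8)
The plan is to compute the distribution $\Phrand(\rt)$ via Fourier inversion and a careful asymptotic analysis of the characteristic function of the random vector $\Lv(\sigma_T,\X)$. First I would introduce the characteristic function
$$
\Psrand(\xb,\yb) := \E\bigg[ \exp\Big( i \sum_{j=1}^J \big( x_j \log|L_j(\sigma_T,\X)| + y_j \arg L_j(\sigma_T,\X) \big) \Big) \bigg].
$$
Because $\{\X(p)\}_p$ are independent, $\Psrand$ factors as an Euler product over primes; taking logarithms and using the expansion $\log L_j(s) = \sum_p\sum_{k\ge 1}\beta_{L_j}(p^k)p^{-ks}$, the dominant contribution comes from the $k=1$ prime terms, and each prime $p$ contributes $\E[\exp(i\,\re(\cdots)\X(p)+\cdots)]$, which is a Bessel-type average over the circle. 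Expanding $\log\Psrand$ in powers of $\xb,\yb$, the quadratic term is $-\tfrac{1}{4}\sum_j (\xb,\yb\text{-form})\cdot\psi_{j,T}$ plus a bounded cross term built from the constants $c_{j,k}$ in A5 (the off-diagonal terms vanishing by the Kronecker $\delta_{j,k}$, which is exactly what forces statistical independence in the leading order and what makes $b_{\kb,\lb}$ possibly nonzero at $\Kcal(\kb+\lb)=2$); the higher-order terms form a convergent series with coefficients $O(\delta_0^{-\Kcal})$ after rescaling by $\sqrt{\psi_{j,T}}$. This is the structural heart of the argument and I expect the bookkeeping of A5's error term and the region of validity in $\xb,\yb$ to be the main obstacle.

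Next I would normalize: writing $\zb = \xb + i\yb$ and rescaling each variable by $\sqrt{\psi_{j,T}}$, one gets
$$
\log\Psrand\Big(\tfrac{\xb}{\sqrt{\psi_{j,T}}}, \tfrac{\yb}{\sqrt{\psi_{j,T}}}\Big) = -\frac{1}{4}\sum_{j=1}^J (x_j^2 + y_j^2) + \sum_{m\ge 3} \frac{P_m(\xb,\yb)}{\sqrt{\psi_{T}}^{\,m-2}} + (\text{bounded correction}),
$$
valid for $\|\zb\|$ up to a small power of $\log\log T$, with an acceptable error outside. Exponentiating the correction series and expanding in inverse powers of $\sqrt{\psi_{j,T}}$ produces, formally, $e^{-\frac14\|\zb\|^2}$ times a power series whose coefficients are polynomials in $\xb,\yb$; these are the generating data for the $b_{\kb,\lb}$. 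I would truncate this expansion at $\Kcal(\kb+\lb)\le \epsilon\log\log T$, controlling the tail by the coefficient bound $O(\delta_0^{-\Kcal})$ against the Gaussian decay.

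Then I would apply Fourier inversion: $\Phrand(\rt)$ equals an integral of $\Psrand$ against the (Fourier transform of the) indicator of the box $\rt$. Substituting the expansion, each term becomes a Gaussian integral times a polynomial, which by the Hermite differentiation identity $\int_a^b e^{-\pi u^2}\Hcal_k(\sqrt\pi u)\,du$ reproduces exactly the product shape in the statement — here I would invoke the definition \eqref{def Hcal} and the integration-by-parts relation quoted after Theorem \ref{thm 1}. The rescaling by $\sqrt{\psi_{j,T}}$ in $\rt$'s endpoints is designed precisely so that the $u_j$ and $v_j$ integrals come out over $[a_j,b_j]$ and $[c_j,d_j]$. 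Matching coefficients defines $\{b_{\kb,\lb}\}$; that $b_{0,0}=1$ is the total-mass normalization, and $b_{\kb,\lb}=0$ for $\Kcal(\kb+\lb)=1$ follows since the cubic-and-higher terms $P_m$ start at $m=3$, so there is no $\sqrt{\psi}^{-1}$ contribution. The remaining work is to bound the error: the tail of the Hermite expansion, the truncation of the $\|\zb\|$-range in Fourier inversion, the smoothing error in replacing the box indicator by a mollified version, and the passage between $\psi_T$ and $\psi_{j,T}$ — each of which I expect to be $O((\log T)^{-\kappa})$ for a suitable $\kappa>0$, with the smoothing/inversion step being the most delicate.
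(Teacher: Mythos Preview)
Your outline is essentially the paper's argument: factor the characteristic function as an Euler product, expand its logarithm as a cumulant series $\sum_{n\ge 2} I_{n,\sigma_T}(\zb)$, separate the diverging quadratic piece $Q_T(\zb)=-\pi^2\sum_j\psi_{j,T}|z_j|^2$, exponentiate the remainder, and Fourier-invert to produce Hermite polynomials. One organizational difference: the paper does not smooth the box indicator but instead proves the Hermite expansion for the \emph{density} $H_T(\ub,\vb)$ of $\Phrand$ (Proposition~\ref{prop}, using \cite[Section~7]{LL} for the existence of $H_T$ and the truncation of the inverse Fourier integral) and then integrates over $\rt$; this bypasses the smoothing error you flag as ``most delicate''.

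There is, however, one genuine gap. You define the $b_{\kb,\lb}$ by ``matching coefficients'' in an expansion built from the cumulants $P_m$ of $\log\Psrand$ at $\sigma=\sigma_T$. Those cumulants depend on $T$, so as written your $b_{\kb,\lb}$ would be $T$-dependent, contrary to the statement. The paper's way out is nontrivial: for $n\ge 3$, one shows (Lemma~\ref{lem fs}, feeding into Lemma~\ref{lemma asymp InsT}) that the Dirichlet series $\sum_p A_{p,s}(\kb_1,\lb_1)\cdots A_{p,s}(\kb_m,\lb_m)$ underlying $I_{n,s}(\zb)$ converges \emph{absolutely} on a half-plane strictly containing $\re(s)\ge\tfrac12$, using assumptions A1 and A3 in an essential way; hence $I_{n,\sigma_T}(\zb)=I_{n,1/2}(\zb)+O\big(C^n\|\zb\|^n(\log T)^{-\theta}\big)$, and one \emph{defines} $I_n(\zb):=I_{n,1/2}(\zb)$. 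For $n=2$ this fails (the $p$-series diverges on the line), and one instead uses A5 to write $I_{2,\sigma_T}(\zb)=Q_T(\zb)+I_2(\zb)+O((\log\log T)(\log T)^{-\theta})$ with $I_2$ a fixed quadratic form. The $b_{\kb,\lb}$ are then the Taylor coefficients of the $T$-independent generating function $G(\xb,\yb)=\exp\big(\sum_{n\ge 2}I_n(\zb)\big)$, and the bound $b_{\kb,\lb}=O(\delta_0^{-\Kcal(\kb+\lb)})$ comes from a Cauchy estimate on $G$. Without this replacement step you cannot obtain a fixed sequence, and the uniform-in-$n$ control of the error $I_{n,\sigma_T}-I_{n,1/2}$ is what keeps the whole expansion honest.
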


 In \cite[Section 7]{LL} we find that the measure $ \Phrand $ is absolutely continuous and it has a density function $H_T  (\ub, \vb)$ such that
 \begin{equation}\label{eqn Phrand HT}
  \Phrand ( \rt ) = \iint_\rt  H_T ( \ub, \vb ) d\ub d\vb . 
  \end{equation}
  Hence, Theorem \ref{thm 3} follows from \eqref{eqn Phrand HT} and the following proposition, which upgrades \cite[Lemma 7.4]{LL}.
\begin{pro}\label{prop}
Let $  \theta > 0 $.  Assume assumptions A1--A6 for $L_1, \ldots, L_J $.  There exist constants $ \epsilon, \kappa >0$ and a sequence  $ \{ b_{\kb, \lb}\}$ of real numbers such that 
\begin{align*}
  H_T ( \ub, \vb) = &    \sum_{\Kcal(\kb+ \lb) \leq \epsilon \log \log T }    b_{\kb, \lb }     \prod_{j =1}^J   \frac{1}{ \pi    \sqrt{ \psi_{j, T}}^{k_j + \ell_j +2}    }    e^{-  \frac{  u_j ^2  + v_j^2  }{\psi_{j,T} }}   \Hcal_{k_j}  \bigg( \frac{  u_j }{ \sqrt{ \psi_{j,T } }} \bigg)  \Hcal_{\ell_j}  \bigg( \frac{  v_j }{ \sqrt{ \psi_{j,T } }} \bigg) \\ 
  & + O \bigg(  \frac{1}{ ( \log T)^\kappa} \bigg)  .
  \end{align*}  
  Moreover, $b_{0,0}= 1 $, $ b_{\kb, \lb} = 0 $ if $\Kcal(\kb + \lb)  = 1$ and $b_{\kb + \lb } = O(  \delta_0^{-\Kcal(\kb+\lb)})$ for some $ \delta_0 > 0$ and all $ \kb, \lb$.
 \end{pro}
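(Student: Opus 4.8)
The plan is to compute the density $H_T(\ub,\vb)$ via Fourier inversion, using the characteristic function of the random vector $\Lv(\sigma_T,\X)$ and exploiting the independence of the Euler factors. Write $\Phrhat(\xb,\yb)$ for the characteristic function, i.e. the Fourier transform of $H_T$. Since $\log L_j(\sigma_T,\X) = \sum_p \sum_{k\ge1} \frac{\beta_{L_j}(p^k)\X(p)^k}{p^{k\sigma_T}}$ and the $\X(p)$ are independent, $\Phrhat$ factors as a product over primes $p$ of local characteristic functions $\Phrqhat$-type factors. The first step is to isolate the quadratic (Gaussian) part: expanding $\log$ of each local factor in powers of $\xb,\yb$, the $p$-by-$p$ contributions sum — by assumption A5 and the prime number theorem — to a leading quadratic form whose coefficients are governed by $\psi_{j,T} = \xi_j\theta\log\log T$ and by the constants $c_{j,k}$ when $j\ne k$. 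Concretely one writes $\log\Phrhat(\xb,\yb) = -\tfrac14\sum_j \psi_{j,T}(x_j^2+y_j^2) + (\text{cross and lower-order terms})$, and the lower-order terms, which carry negative powers of $\log\log T$, are the source of the coefficients $b_{\kb,\lb}$.

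Next I would make this precise by truncating the prime sum at a suitable point (as in \cite[Section 7]{LL}), controlling the tail $\Phrqtailhat$, and Taylor-expanding the logarithm of the truncated characteristic function. This produces, after exponentiating, an expansion of the form $\Phrhat(\xb,\yb) = e^{-\frac14\sum_j\psi_{j,T}(x_j^2+y_j^2)}\bigl(1 + \sum P_m(\xb,\yb)\bigr)$ where each $P_m$ is a polynomial of bounded degree with coefficients of size $O(\delta_0^{-m})(\log\log T)^{-\lceil m/2\rceil}$-ish; the precise bookkeeping is exactly what upgrades \cite[Lemma 7.4]{LL} from a bound to an asymptotic expansion. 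Then I invert: since the Fourier transform of $e^{-\frac14\psi_{j,T}(x_j^2+y_j^2)} x_j^{k_j} y_j^{\ell_j}$ is, up to normalizing constants, $\frac{1}{\pi\psi_{j,T}} e^{-(u_j^2+v_j^2)/\psi_{j,T}}\Hcal_{k_j}(u_j/\sqrt{\psi_{j,T}})\Hcal_{\ell_j}(v_j/\sqrt{\psi_{j,T}})$ times the appropriate power of $\psi_{j,T}^{-1/2}$ — using the classical identity that Hermite functions are eigenfunctions of the Fourier transform — term-by-term inversion of the expansion gives exactly the claimed shape, and collecting powers of $\psi_{j,T}^{-1/2}$ determines the $b_{\kb,\lb}$. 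The normalizations $b_{0,0}=1$ and $b_{\kb,\lb}=0$ for $\Kcal(\kb+\lb)=1$ follow because $H_T$ integrates to $1$ and the random vector is centered (each $\ex[\X(p)^k]=0$ kills the linear term), while the bound $b_{\kb,\lb}=O(\delta_0^{-\Kcal(\kb+\lb)})$ is inherited from the geometric growth of the Taylor coefficients of $\log$ composed with the $p$-sums, combined with $|\alpha_{j,i}(p)|\le p^\eta$ with $\eta<\tfrac12$.

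The main obstacle I expect is twofold. First, making the inversion step legitimate: one must show the error term $O((\log T)^{-\kappa})$ genuinely survives pointwise (or in the relevant integrated sense against $\rt$), which requires quantitative control on $\Phrhat$ away from the origin — i.e. a decay estimate $|\Phrhat(\xb,\yb)| \ll$ (something summable) for $\|\xb\|+\|\yb\|$ large, so that truncating the inversion integral and discarding the remainder costs only $O((\log T)^{-\kappa})$. This is where assumption A3 (Ramanujan on average) and the structure of the Euler product enter, and it is the technically heaviest part. Second, one must verify that after truncating the Taylor expansion of $\log\Phrhat$ at order $\asymp \epsilon\log\log T$, the discarded higher-order terms contribute only $O((\log T)^{-\kappa})$; this forces $\epsilon$ small enough that $\delta_0^{-\epsilon\log\log T}(\log\log T)^{-\epsilon\log\log T/2}$ beats a fixed negative power of $\log T$, which is a routine but delicate optimization of constants. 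The cross-terms from $c_{j,k}$ ($j\ne k$) need care in the multi-variate case but do not change the shape of the answer since the Gaussian part remains diagonal in $(x_j,y_j)$ to leading order — the off-diagonal pieces are lower order and get absorbed into the $b_{\kb,\lb}$.
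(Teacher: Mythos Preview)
Your overall strategy is correct and matches the paper's: factor $\Phrhat$ over primes, take logarithm and expand as a power series in $(\xb,\yb)$, peel off the diagonal Gaussian quadratic form $Q_T(\zb)=-\pi^2\sum_j\psi_{j,T}|z_j|^2$ (the paper uses the $e^{2\pi i}$ Fourier convention, hence $\pi^2$ rather than $\tfrac14$), exponentiate the remainder, truncate at degree $\epsilon\log\log T$, and invert term by term using the identity $\int_{\mathbb{R}}e^{-\psi\pi^2 x^2-2\pi i xu}x^k\,dx = (2\pi i)^{-k}\pi^{-1/2}\psi^{-(k+1)/2}e^{-u^2/\psi}\Hcal_k(u/\sqrt{\psi})$.

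Two points of your sketch deviate from the actual argument and would cause trouble if implemented literally. First, there is no truncation of the prime sum: one shows instead (Lemma~\ref{lemma Rpst}) that $|R_{p,\sigma_T}(\zb)|\le\tfrac12$ uniformly in $p$ for $\|\zb\|\le\delta_1$, so $\log\Phrhat=\sum_p\log(1+R_{p,\sigma_T})$ converges as a genuine power series over \emph{all} primes. Second, and more importantly, the coefficients $b_{\kb,\lb}$ must be $T$-independent --- the statement requires a fixed sequence of real numbers --- so your description of $P_m$ having ``coefficients of size $O(\delta_0^{-m})(\log\log T)^{-\lceil m/2\rceil}$'' conflates two separate things. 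The paper achieves $T$-independence by the key step (Lemma~\ref{lemma asymp InsT}) of replacing each homogeneous component $I_{n,\sigma_T}(\zb)$ of $\log\Phrhat - Q_T$ by its limiting value $I_n(\zb)$ (equal to $I_{n,1/2}(\zb)$ for $n\ge3$, and a fixed bilinear form in $\bar\zb,\zb$ for $n=2$), at the cost of an $O((\log T)^{-\theta})$ error; the $b_{\kb,\lb}$ are then the coefficients of the $T$-independent generating function $G(\xb,\yb)=\exp\bigl(\sum_{n\ge2}I_n(\zb)\bigr)$, and the negative powers of $\log\log T$ in the final expansion arise solely from the factors $\psi_{j,T}^{-(k_j+\ell_j+2)/2}$ produced by the Gaussian integrals in the inversion step. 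Without this separation your $b_{\kb,\lb}$ would depend on $T$ and the statement as written would not be proved.
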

 
To prove Proposition \ref{prop}, it requires to understand the Fourier transform
$$ \Phrhat (\xb , \yb ) := \int_{ \mathbb{R}^{2J} }  e^{ 2 \pi i  (\xb \cdot \ub + \yb \cdot \vb) } d\Phrand ( \u, \v) $$
for $\xb , \yb \in \mathbb{R}^J$. By the definition of $\Phrand$ in \eqref{Phi rand def},  we have  
 \begin{equation}\notag
\begin{split}
\Phrhat ( \xb, \yb) = \E \Bigg[   \exp \Bigg( 2 \pi  i & \sum_{j=1}^J \big(  x_j \log |   L_j   ( \sigma_T, \X )|   +   y_j \arg L_j ( \sigma_T, \X )   \big) \Bigg) \Bigg].
\end{split}\end{equation}
By assumptions A1 and A6 we see that 
\begin{equation}\label{eqn Ljpk}
 \beta_{L_j} (p^k ) =  \frac1k \sum_{i=1}^d \alpha_{j,i}(p)^k . 
 \end{equation}
By \eqref{eqn Ljpk} and \eqref{def Lj random} we have
$$ \log L_j ( \sigma , \X )   = \sum_p    \sum_{k=1 }^\infty  \frac{  \beta_{L_j} ( p^k ) \X(p)^k }{ p^{k \sigma  }}  . $$
Define
 \begin{equation}\label{def gjps}
  g_{j,p} (  \sigma   ) :=    \sum_{k=1 }^\infty  \frac{  \beta_{L_j} ( p^k ) \X(p)^k }{ p^{k \sigma  }}   , 
  \end{equation}
then we have
 \begin{equation} \label{eqn Phrhat prod 1}
\Phrhat ( \xb, \yb)   = \prod_p \varphi_{p, \sigma_T} (\xb, \yb) ,
 \end{equation}
where 
$$
\varphi_{p, \sigma} (\xb, \yb):= \mathbb{E} \left[ \exp\left(2 \pi i  \sum_{j=1}^J \big( x_j \re\left( g_{j,p} (  \sigma  ) \right)   +     y_j \im \left(   g_{j,p} (  \sigma  ) \right) \big) \right)   \right] $$
 for each prime $p$. Let $ \zb = ( z_1 , \ldots , z_J ) = \xb + i \yb $, then we find that
 $$ \varphi_{p, \sigma } ( \xb , \yb ) = \mathbb{E} \left[ \prod_{j=1}^J  e^{  \pi i    \overline{z_j }  g_{j,p} (  \sigma  ) } e^{ \pi i   z_j \overline{  g_{j,p} (  \sigma  ) }   }    \right]  .$$
By expanding the $2J$ exponential functions into power series we obtain
\begin{align*}
\varphi_{p, \sigma} (\xb, \yb) 
&=  \sum_{ \kb , \lb \in (\mathbb{Z}_{ \geq 0 })^J }  \frac{  (\pi i )^{\Kcal ( \kb + \lb ) } \overline{\zb}^{ \kb }  \zb^{  \lb } }{\kb! \lb!} \E \bigg[  \prod_{j=1}^J  g_{j,p} (  \sigma  )^{k_j }  \overline{g_{j,p} (  \sigma  )}^{\ell_j } \bigg] 
\end{align*}
with notations for vectors in the end of Section \ref{Introduction}. 
  It is easy to see that the expectation
   \begin{equation}\label{def apskl}
    A_{p, \sigma } (\kb,\lb) :=  \E \bigg[  \prod_{j=1}^J g_{j,p} (  \sigma )^{k_j }  \overline{ g_{j,p} (  \sigma  ) }^{\ell_j } \bigg]
    \end{equation}
satisfies $ A_{p, \sigma}(0,0)= 1$ and $ A_{p, \sigma}(0, \kb)= A_{p,\sigma}(\kb, 0)= 0 $ for $ \kb \neq 0 $. Thus, we obtain 
\be\label{PSIA}
 \varphi_{p, \sigma } (\xb, \yb)= 1+ R_{p, \sigma}(\zb),
 \ee
where 
\begin{equation}\label{def Rpstzb}
  R_{p, \sigma }(\zb) := \sum_{\kb \neq 0}  \sum_{\lb \neq 0 }    \frac{  (\pi i )^{\mathcal{K}( \kb + \lb ) } \overline{\zb}^{ \kb }  \zb^{  \lb } }{\kb! \lb!} A_{p,\sigma } (\kb,\lb).
  \end{equation}
Hence, by \eqref{eqn Phrhat prod 1} and \eqref{PSIA} we have
\begin{equation}\label{eqn Phrhat prod 2}
       \Phrhat( \xb, \yb) = \prod_p  ( 1+ R_{p, \sigma_T }(\zb ) ). 
       \end{equation}
To compute the product in \eqref{eqn Phrhat prod 2}, it requires the following lemma.
\begin{lem}\label{lemma Rpst}
There exists a constant $\delta_1 >0$ such that
  $$ |R_{p, \sigma_T} ( \zb) | \leq \frac12 $$
   for every prime $p$ and $ || \zb || \leq \delta_1 $. 
\end{lem}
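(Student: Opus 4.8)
The plan is to read the bound off the identity $\varphi_{p,\sigma_T}(\xb,\yb)=1+R_{p,\sigma_T}(\zb)$ from \eqref{PSIA}: since $|R_{p,\sigma_T}(\zb)|=|\varphi_{p,\sigma_T}(\xb,\yb)-1|$, it suffices to show that the expectation $\varphi_{p,\sigma_T}(\xb,\yb)$ is within $\tfrac12$ of $1$ once $||\zb||$ is small, and crucially to do so uniformly in the prime $p$ and in $T$.

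First I would record a uniform pointwise bound on the random variables $g_{j,p}(\sigma_T)$ of \eqref{def gjps}. By \eqref{eqn Ljpk} and assumption A1 one has $|\beta_{L_j}(p^k)|\le \tfrac{d}{k}p^{k\eta}$, so, because $\X(p)$ lies on the unit circle and $\sigma_T>\tfrac12>\eta$, the local series converges absolutely and
\[
|g_{j,p}(\sigma_T)|\le \sum_{k\ge 1}\frac{d}{k}\,p^{-k(\sigma_T-\eta)}=-d\log\bigl(1-p^{-(\sigma_T-\eta)}\bigr)\le -d\log\bigl(1-2^{-(\frac12-\eta)}\bigr)=:M,
\]
where the last inequality uses $p\ge 2$ and $\sigma_T>\tfrac12$. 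Here $M<\infty$ depends only on $d$ and $\eta$, and the bound holds surely, not merely almost surely; this is exactly where the hypothesis $\eta<\tfrac12$ (which forces $\sigma_T-\eta\ge\tfrac12-\eta>0$ for every $T$) is used to obtain uniformity.

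With this in hand the rest is short. Writing $\zb=\xb+i\yb$, the computation preceding \eqref{PSIA} gives $\varphi_{p,\sigma_T}(\xb,\yb)=\E[e^{i\Theta}]$ with the real random variable $\Theta:=2\pi\sum_{j=1}^J\re\bigl(\overline{z_j}\,g_{j,p}(\sigma_T)\bigr)$. By the previous step and Cauchy--Schwarz ($\sum_j|z_j|\le\sqrt{J}\,||\zb||$),
\[
|\Theta|\le 2\pi\sum_{j=1}^J|z_j|\,|g_{j,p}(\sigma_T)|\le 2\pi M\sqrt{J}\,||\zb||
\]
surely, and then, using $|e^{i\theta}-1|\le|\theta|$ for real $\theta$,
\[
|R_{p,\sigma_T}(\zb)|=\bigl|\E[e^{i\Theta}]-1\bigr|\le \E\,|e^{i\Theta}-1|\le \E\,|\Theta|\le 2\pi M\sqrt{J}\,||\zb||.
\]
Taking $\delta_1:=(4\pi M\sqrt{J})^{-1}$ then yields $|R_{p,\sigma_T}(\zb)|\le\tfrac12$ for all primes $p$ and all $||\zb||\le\delta_1$.

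I do not expect a serious obstacle here; the only point requiring care is the uniformity of $M$ in $p$ and $T$, handled above. A variant that avoids the characteristic-function viewpoint is to estimate $R_{p,\sigma_T}(\zb)$ directly from the series \eqref{def Rpstzb}: bounding $|A_{p,\sigma_T}(\kb,\lb)|\le \E\bigl[\prod_j|g_{j,p}(\sigma_T)|^{k_j+\ell_j}\bigr]\le M^{\Kcal(\kb+\lb)}$ and summing the two resulting exponential series gives $|R_{p,\sigma_T}(\zb)|\le\bigl(e^{\pi M\sqrt{J}\,||\zb||}-1\bigr)^2$, which is again $\le\tfrac12$ for $||\zb||$ small enough.
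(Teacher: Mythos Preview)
Your proof is correct. The main argument you give differs from the paper's: you exploit the identity $R_{p,\sigma_T}(\zb)=\varphi_{p,\sigma_T}(\xb,\yb)-1$ and the elementary inequality $|e^{i\theta}-1|\le|\theta|$ for real $\theta$, obtaining a bound linear in $\|\zb\|$ with a constant depending only on $d,\eta,J$. The paper instead works directly with the power series \eqref{def Rpstzb}: it bounds $|g_{j,p}(\sigma_T)|\le C_1/p^{1/2-\eta}$ (retaining the $p$-dependence rather than using the crude uniform bound $M$), sums the two exponential series as in your variant, and arrives at $|R_{p,\sigma_T}(\zb)|\le C_2\,p^{-(1-2\eta)}\|\zb\|^2$ for $\|\zb\|\le 1$. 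Thus the paper's estimate is quadratic in $\|\zb\|$ and decays in $p$; yours is linear and uniform in $p$. For the purpose of this lemma both suffice, and your characteristic-function route is slightly shorter. Your ``variant'' is essentially the paper's computation with the coarser bound $|g_{j,p}|\le M$ in place of $C_1/p^{1/2-\eta}$.
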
  
See Section \ref{sec proof lemma Rpst} for a proof. By Lemma \ref{lemma Rpst} we have
\begin{equation}\label{eqn Phrhat prod 3} \begin{split}
 \Phrhat(\xb, \yb)  & =  \exp \bigg(  \sum_p   \log  ( 1+ R_{p, \sigma_T} ( \zb ))  \bigg)  \\
 & =  \exp \bigg(  \sum_p  \sum_{m=1}^\infty \frac{ (-1)^{m-1}}{ m}   R_{p, \sigma_T} ( \zb )^m \bigg) 
 \end{split}
 \end{equation} 
 for $|| \zb || \leq \delta_1  $.  By \eqref{def Rpstzb} the sum $ \sum_p  \sum_{m=1}^\infty \frac{ (-1)^{m-1}}{ m}   R_{p, \sigma} ( \zb )^m $ has a power series representation in $ z_1 , \ldots , z_J,  \overline{z_1} , \ldots, \overline{z_J} $, so let $B_{\sigma} ( \kb , \lb)$ be the coefficients such that
\begin{equation}\label{def Bskl}
 \sum_{ \kb \neq 0 } \sum_{ \lb \neq 0 }  B_{\sigma} (\kb , \lb ) \overline{\zb}^\kb \zb^\lb =     \sum_p   \sum_{ m=1}^{\infty }    \frac{ (-1)^{m-1}}{m}      R_{p, \sigma}(\zb)^m  . 
 \end{equation}
Define  $ I_{n, \sigma} ( \zb)$ for each $ n \geq 2 $ by the sum of the degree $n$ terms in the above sum, i.e.,
\begin{equation}\label{def Inszb}
   I_{n, \sigma}(\zb) := \sum_{ \substack{ \kb  , \lb \neq 0  \\    \Kcal(\kb + \lb ) = n  }    }    B_{\sigma} (\kb , \lb ) \overline{\zb}^\kb \zb^\lb  .
   \end{equation}
We see that $ I_{n, \sigma} ( \zb)$ is a homogeneous polynomial in $x_1 , \ldots , x_J, y_1 , \ldots , y_J$ of degree $n$, and that
\begin{equation}\label{eqn Phrhat InsT}
 \Phrhat ( \xb, \yb)=\exp \left(      \sum_{ n=2}^\infty        I_{n, \sigma_T } ( \zb)  \right) 
 \end{equation}
for $ ||\zb || \leq \delta_1 $ by \eqref{eqn Phrhat prod 3}--\eqref{def Inszb}. We find an asymptotic formula for $I_{n, \sigma_T}( \zb)$ as $ T \to \infty$ in the following lemma.
\begin{lem}\label{lemma asymp InsT}
There are complex numbers $ C_{j_1 , j_2 }$ such that
\begin{equation}\label{eqn I2sTzb asymp}
  I_{2, \sigma_T } (\zb) = - \pi^2   \sum_{j =1}^J \psi_{j, T} |z_j|^2  +  \sum_{j_1 , j_2  =1}^J     C_{j_1 , j_2 } \overline{z_{j_1}}z_{j_2}  + O  \bigg(  \frac{ \log \log T }{ ( \log T)^\theta } \bigg)     
  \end{equation}
for $ || \zb || \leq \delta_1 $, where $ \psi_{j,T}$ is defined in \eqref{def psijT} and $ \overline{C_{j_1, j_2 }} = C_{j_2 , j_1 }$. For $ n \geq 3 $, there is a constant $ C=C_{J,d,\eta}  >0$ such that  
$$ |  I_{n, \sigma}(\zb)| \leq C^n || \zb||^n $$
for $ \sigma \geq \frac12$ and
$$ |  I_{n, \sigma_T }(\zb)-I_{n,1/2}(\zb)| \leq  \frac{ C^n || \zb||^n}{( \log T)^\theta} . $$
\end{lem}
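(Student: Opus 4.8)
The plan is to isolate the degree--two part $I_{2,\sigma_T}(\zb)$, where Selberg's orthogonality A5 supplies the explicit main term $-\pi^2\sum_j\psi_{j,T}|z_j|^2$, from the parts of degree $n\ge 3$, which are estimated crudely but uniformly in $\sigma\ge\tfrac12$. Throughout put $\rho:=\tfrac12-\eta>0$ and $\delta_T:=(\log T)^{-\theta}$, so $\sigma_T=\tfrac12+\delta_T$; only A1, A3 and A5 enter.

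\emph{The degree--two term.} Each $R_{p,\sigma}(\zb)$ of \eqref{def Rpstzb} begins in degree two, so only $m=1$ in \eqref{def Bskl} contributes to degree two, and $\kb,\lb\neq0$ with $\Kcal(\kb+\lb)=2$ forces $\Kcal(\kb)=\Kcal(\lb)=1$. Expanding $g_{j,p}(\sigma)$ from \eqref{def gjps} and using $\E[\X(p)^a\overline{\X(p)}^b]=\delta_{a,b}$ one gets $\E[g_{j_1,p}g_{j_2,p}]=\E[\overline{g_{j_1,p}}\,\overline{g_{j_2,p}}]=0$ and $\E[g_{j_1,p}(\sigma)\overline{g_{j_2,p}(\sigma)}]=\sum_{k\ge1}\beta_{L_{j_1}}(p^k)\overline{\beta_{L_{j_2}}(p^k)}p^{-2k\sigma}$, whence
$$I_{2,\sigma}(\zb)=-\pi^2\sum_{j_1,j_2}D_{j_1,j_2}(\sigma)\,\overline{z_{j_1}}z_{j_2},\qquad D_{j_1,j_2}(\sigma):=\sum_p\sum_{k\ge1}\frac{\beta_{L_{j_1}}(p^k)\overline{\beta_{L_{j_2}}(p^k)}}{p^{2k\sigma}},$$
with $\overline{D_{j_1,j_2}(\sigma)}=D_{j_2,j_1}(\sigma)$. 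In $D_{j_1,j_2}(\sigma_T)$ the contribution of $k\ge2$ is treated with $|\beta_{L_j}(p^k)|\le\tfrac dk p^{k\eta}$ from A1 and, when that is too weak, with $|\beta_{L_j}(p^k)|^2\ll p^{2(k-1)\eta}\sum_i|\alpha_{j,i}(p)|^2$ combined via Cauchy--Schwarz with partial summation on A3 (taking $\epsilon<\rho$); this shows the $k\ge2$ part converges absolutely and uniformly for $\sigma\ge\tfrac12$, so it equals a constant at $\sigma=\tfrac12$, up to $O(\delta_T)$ at $\sigma_T$. For $k=1$ put $S(x):=\sum_{p\le x}\beta_{L_{j_1}}(p)\overline{\beta_{L_{j_2}}(p)}/p=\delta_{j_1,j_2}\xi_{j_1}\log\log x+c_{j_1,j_2}+O(1/\log x)$ by A5 and apply partial summation,
$$\sum_p\frac{\beta_{L_{j_1}}(p)\overline{\beta_{L_{j_2}}(p)}}{p^{2\sigma_T}}=2\delta_T\int_2^\infty S(x)\,x^{-2\delta_T-1}\,dx,$$
where the substitution $u=2\delta_T\log x$ sends the main term to $\delta_{j_1,j_2}\xi_{j_1}\int_{2\delta_T\log2}^\infty(\log u-\log(2\delta_T))e^{-u}\,du=\delta_{j_1,j_2}\xi_{j_1}(\theta\log\log T+O(1))+O(\delta_T)$, while the $O(1/\log x)$ term contributes $O(\delta_T\log(1/\delta_T))$. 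Since $\psi_{j,T}=\xi_j\theta\log\log T$ and $\delta_T\log(1/\delta_T)\asymp(\log\log T)(\log T)^{-\theta}$, adding the two ranges of $k$ gives $D_{j_1,j_2}(\sigma_T)=\delta_{j_1,j_2}\psi_{j_1,T}+\widetilde C_{j_1,j_2}+O((\log\log T)(\log T)^{-\theta})$ for a Hermitian array $\widetilde C$ (Hermiticity uses $\overline{c_{j_1,j_2}}=c_{j_2,j_1}$, obtained by conjugating A5). Multiplying by $-\pi^2\overline{z_{j_1}}z_{j_2}$, summing over $j_1,j_2$, and using $\|\zb\|\le\delta_1$ to swallow $\sum_{j_1,j_2}|\overline{z_{j_1}}z_{j_2}|\le J\|\zb\|^2$ into the error proves \eqref{eqn I2sTzb asymp} with $C_{j_1,j_2}=-\pi^2\widetilde C_{j_1,j_2}$.

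\emph{The terms of degree $n\ge3$.} Writing the exponent in $\varphi_{p,\sigma}$ as $iT_p$ with $T_p=T_p(\zb):=\pi\sum_j(\overline{z_j}g_{j,p}(\sigma)+z_j\overline{g_{j,p}(\sigma)})=2\pi\sum_j\re(\overline{z_j}g_{j,p}(\sigma))$ --- real, linear in $(\zb,\overline\zb)$, and with $\E[T_p]=0$ because $\E[\X(p)^k]=0$ for $k\ge1$ --- expanding $e^{iT_p}-1$ shows the degree-$\ell$ part of $R_{p,\sigma}(\zb)=\varphi_{p,\sigma}-1$ is $r_{p,\ell}(\zb):=\tfrac{i^\ell}{\ell!}\E[T_p^\ell]$ for $\ell\ge2$ (and $r_{p,1}=0$); hence by Cauchy--Schwarz $|r_{p,\ell}(\zb)|\le\tfrac1{\ell!}\E[|T_p|^\ell]\le\tfrac{(2\pi\|\zb\|)^\ell}{\ell!}\E[U_p^\ell]$, where $U_p:=(\sum_j|g_{j,p}(\sigma)|^2)^{1/2}$. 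Extracting the degree-$n$ part of $\sum_p\sum_{m\ge1}\tfrac{(-1)^{m-1}}{m}(\sum_{\ell\ge2}r_{p,\ell})^m$ (legitimate for $\|\zb\|\le\delta_1$ by Lemma \ref{lemma Rpst} and its obvious analogue at $\sigma=\tfrac12$),
$$|I_{n,\sigma}(\zb)|\le(2\pi\|\zb\|)^n\sum_p\ \sum_{1\le m\le n/2}\frac1m\sum_{\substack{\ell_1+\cdots+\ell_m=n\\ \ell_i\ge2}}\ \prod_{i=1}^m\frac{\E[U_p^{\ell_i}]}{\ell_i!}.$$
Now $\|g_{j,p}(\sigma)\|_\infty\le-d\log(1-p^{-(\sigma-\eta)})$ is bounded on every prime and is $\ll_d p^{-\rho}$, so $\|U_p\|_\infty\le M$ and $\|U_p\|_\infty\ll p^{-\rho}$ uniformly for $\sigma\ge\tfrac12$; likewise $\E[U_p^2]\le M_2$ and $\E[U_p^2]\ll p^{-2\rho}$; and $\E[U_p^\ell]\le\E[U_p^2]\|U_p\|_\infty^{\ell-2}$. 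The decisive input, and the only place A3 is used here, is that
$$\sum_p\E[U_p^2]\,\|U_p\|_\infty\ \ll\ \sum_j\sum_p\sum_{k\ge1}\frac{|\beta_{L_j}(p^k)|^2}{p^{2k\sigma+\rho}}\qquad\text{and}\qquad\sum_p\E[U_p^2]^2$$
are finite uniformly for $\sigma\ge\tfrac12$: the spare $p^{-\rho}$ (resp.\ the square) makes the critical $k=1$ sums $\sum_p|\beta_{L_j}(p)|^2p^{-2\sigma-\rho}$ and $\sum_p|\beta_{L_j}(p)|^2p^{-(4\sigma-2\eta)}$ converge by A3 with $\epsilon<\rho$, whereas $\sum_p\E[U_p^2]$ itself diverges at $\sigma=\tfrac12$. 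A short bookkeeping --- in each product either $m\ge2$, or $m=1$ with $\ell_1=n\ge3$ so $\E[U_p^n]\le M^{n-3}\E[U_p^2]\|U_p\|_\infty$ --- now bounds every $p$-sum above by $C_0^n$ times a fixed constant, and with $\sum_{\ell_1+\cdots+\ell_m=n}\prod_i(\ell_i!)^{-1}=m^n/n!$ and Stirling this yields $|I_{n,\sigma}(\zb)|\le C^n\|\zb\|^n$ for all $\sigma\ge\tfrac12$. For the difference one reruns the estimate after replacing one factor $g_{j,p}(\sigma)$ by $g_{j,p}(\sigma_T)-g_{j,p}(1/2)=\int_{1/2}^{\sigma_T}\partial_\sigma g_{j,p}(\sigma)\,d\sigma$, whose sup-norm is $\ll\delta_T\log p$ times a bounded, $\ll p^{-\rho}$ quantity; the extra $\log p$ leaves every $p$-sum convergent while $\delta_T$ factors out, giving $|I_{n,\sigma_T}(\zb)-I_{n,1/2}(\zb)|\le C^n\|\zb\|^n(\log T)^{-\theta}$.

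The step I expect to be the main obstacle is establishing these two bounds for $n\ge3$: one must show the $p$-sums defining $I_{n,\sigma}(\zb)$ converge absolutely with constants of the shape $C^n$, uniformly down to $\sigma=\tfrac12$, even though the relevant second moments already diverge there. The remedy is to squeeze an honest extra decay factor $\|g_{j,p}\|_\infty\ll p^{-\rho}$ out of the degree-$\ge3$ homogeneity and then route the surviving $|\beta_{L_j}(p)|^2/p$-type sums through the average Ramanujan bound A3 --- the pointwise bound A1 does not suffice once $\eta$ is close to $\tfrac12$ --- and exactly the same A1/A3 interplay is what forces the $k\ge2$ terms in $D_{j_1,j_2}(\sigma_T)$ to converge.
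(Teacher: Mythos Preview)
Your argument is correct and close in spirit to the paper's, but the paper handles the difference $I_{n,\sigma_T}(\zb)-I_{n,1/2}(\zb)$ by a genuinely different (and cleaner) device. For the $n=2$ case you supply a direct partial-summation computation from A5, whereas the paper simply cites \cite[Lemma~7.3]{LL}; your Hermiticity argument via $\overline{c_{j_1,j_2}}=c_{j_2,j_1}$ is fine, and the paper instead deduces it from $\overline{A_{p,\sigma}(\kb,\lb)}=A_{p,\sigma}(\lb,\kb)$. For the uniform bound with $n\ge3$ your $T_p$/$U_p$ bookkeeping is essentially a repackaging of the paper's Lemma~\ref{lem fs}: both extract an extra $p^{-\rho}$ from the homogeneity $n\ge3$ (you via $\E[U_p^n]\le\E[U_p^2]\|U_p\|_\infty^{n-2}$, the paper via $|\beta_{L_j}(p)|^n\le(dp^\eta)^{n-2}|\beta_{L_j}(p)|^2$) and then appeal to A3 by partial summation.

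The substantive methodological difference is the last inequality. You telescope one $g_{j,p}$ at a time and carry the derivative bound $\|\partial_\sigma g_{j,p}\|_\infty\ll p^{-\rho}\log p$ through all the combinatorics; this works, but the sentence ``the extra $\log p$ leaves every $p$-sum convergent'' hides some case analysis (e.g.\ when $m\ge2$ one must still retain an $\E[U_p^2]^2$-type factor after telescoping, and when $m=1$ the crude $\|T_p(\sigma_T)-T_p(1/2)\|_\infty$ bound must be paired with $\E[(U_p^*)^{n-1}]$ rather than $\E[U_p^n]$). The paper avoids all of this: it observes that the Dirichlet series $f(s)=\sum_p\prod_iA_{p,s}(\kb_i,\lb_i)$ actually converges absolutely in the \emph{larger} half-plane $\re(s)\ge\tfrac{5+2\eta}{12}<\tfrac12$, with the same bound $|f(s)|\le C_3^{\,n}$ there. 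Then Cauchy's integral formula gives $|f'(u)|\le C_3^{\,n}/\varepsilon_1$ on $[\tfrac12,\sigma_T]$, and $|f(\sigma_T)-f(\tfrac12)|\le(\sigma_T-\tfrac12)\sup|f'|\le C_3^{\,n}(\log T)^{-\theta}/\varepsilon_1$ follows immediately---no second pass through the combinatorics, and no $\log p$ to absorb. Your route is more ``real-variable'' and self-contained; the paper's analytic-continuation trick buys the difference bound essentially for free once the uniform bound is in hand.
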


See Section \ref{sec proof lemma asymp InsT} for a proof. Define
\begin{equation}\label{def QTzb}
 Q_T (\zb ) := - \pi^2  \sum_{j =1}^J  \psi_{j, T}  |z_j|^2,  
 \end{equation}
\begin{equation}\label{def I2zb}
 I_2 ( \zb ) :=   \sum_{j_1 , j_2  =1}^J     C_{j_1 , j_2 } \overline{z_{j_1}}z_{j_2}  
 \end{equation}
and 
\begin{equation}\label{def Inzb}
 I_n ( \zb ) := I_{n, 1/2 } ( \zb) 
 \end{equation}
for $ n > 2 $. By \eqref{def I2zb} and the Cauchy-Schwarz inequality we obtain 
$$ | I_2 ( \zb) | \leq J ( \max_{j_1, j_2} |C_{j_1, j_2}|) || \zb ||^2  .$$
By this inequality, \eqref{def Inzb} and Lemma \ref{lemma asymp InsT} we have
\begin{equation}\label{Inzb bound 1}
 | I_n ( \zb ) | \leq 2^{-n}
 \end{equation}
for $ n \geq 2 $ and $ || \zb || \leq \delta_2 $, where 
\begin{equation}\label{def delta2}
 \delta_2 := \min\bigg\{ \delta_1 , \frac{1}{2C} , \frac{1}{2 \sqrt{ J  \max_{j_1, j_2} |C_{j_1, j_2}|}  }  \bigg\} .  
 \end{equation}
It follows from \eqref{eqn Phrhat InsT}, Lemma \ref{lemma asymp InsT} and \eqref{def QTzb}--\eqref{Inzb bound 1} that 
\begin{equation}\label{eqn Phrhat sum 1}\begin{split}
\Phrhat ( \xb, \yb) & =\exp \left[   Q_T ( \zb)  +     \sum_{ n=2}^\infty        I_{n } ( \zb) + O\bigg(  \frac{\log \log T }{ ( \log T)^\theta} \bigg)   \right]  \\
& = e^{Q_T (\zb)}  \bigg(  \sum_{r=0}^\infty \frac{1}{ r!}  \bigg(    \sum_{ n=2}^\infty        I_{n  } ( \zb) \bigg)^r  + O\bigg(  \frac{\log \log T }{ ( \log T)^\theta} \bigg) \bigg) 
\end{split}\end{equation}
for $ || \zb || \leq \delta_2 $.
Note that each $I_n ( \zb)$ is a homogeneous polynomial in $ x_1 , \ldots , x_J, y_1, \ldots, y_J$ of degree $ n$ and does not depend on $ T$.
Since the sum  $  \sum_{r=0}^\infty \frac{1}{ r!}  \big(    \sum_{ n=2}^\infty        I_{n  } ( \zb) \big)^r  $ is a power series in $ \xb $ and $\yb$, we let $\{ b_{\kb, \lb} \} $ be a sequence of complex numbers such that
\begin{equation}\label{def bkl}
 G(\xb, \yb):=  \sum_{\kb, \lb } ( 2 \pi i )^{\Kcal( \kb+ \lb)} b_{\kb, \lb} \xb^\kb \yb^\lb  =   \sum_{r=0}^\infty \frac{1}{ r!}  \bigg(    \sum_{ n=2}^\infty        I_{n  } ( \zb) \bigg)^r  .
  \end{equation}
 Then the $b_{\kb, \lb}$ satisfy the following properties.
 \begin{lem} \label{lem bkl}
Let  $ \delta_3$ be a constant satisfying $ 0 < \delta_3 < \frac{ \pi}{\sqrt{J}} \delta_2$, then $b_{\kb, \lb}$ is a real number and 
\begin{equation}\label{bkl bound}
  | b_{\kb, \lb} | \leq \frac{\sqrt{e}}{\delta_3^{\Kcal( \kb + \lb ) }} 
  \end{equation}
  for every $\kb, \lb$.     In particular,  
$ b_{0,0}=1 $ and $ b_{\kb, \lb} = 0 $ if $\Kcal( \kb + \lb ) = 1 $. 
\end{lem}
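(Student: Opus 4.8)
The plan is to extract the required properties of the $b_{\kb,\lb}$ directly from the defining identity \eqref{def bkl}, using only the structural facts about the $I_n(\zb)$ recorded in Lemma \ref{lemma asymp InsT} together with \eqref{Inzb bound 1}. First I would observe that \eqref{def bkl} can be read as the Taylor expansion at the origin of the function $G(\xb,\yb) = \exp\big(\sum_{n\ge 2} I_n(\zb)\big)/e^{Q_T(\zb)}\cdot e^{Q_T(\zb)}$—more precisely, $G$ is itself the analytic function $\sum_{r\ge 0}\frac{1}{r!}\big(\sum_{n\ge 2}I_n(\zb)\big)^r = \exp\big(\sum_{n\ge 2}I_n(\zb)\big)$, which by \eqref{Inzb bound 1} converges absolutely and uniformly on $\|\zb\|\le \delta_2$ and is therefore a genuine analytic function of $(x_1,\dots,x_J,y_1,\dots,y_J)$ there (recall $z_j = x_j+iy_j$, so each $I_n$ is a polynomial in the real variables). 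The coefficients $(2\pi i)^{\Kcal(\kb+\lb)}b_{\kb,\lb}$ are then the Taylor coefficients of this analytic function, and $b_{0,0}=1$ is immediate from $G(0,0)=\exp(0)=1$. The vanishing of $b_{\kb,\lb}$ when $\Kcal(\kb+\lb)=1$ follows because the exponent $\sum_{n\ge 2}I_n(\zb)$ has no linear part (the sum starts at $n=2$), so $G$ has no degree-one term in its expansion.

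Next I would establish that the $b_{\kb,\lb}$ are real. The cleanest route is to show $G(\xb,\yb)$ takes the form $\sum_{\kb,\lb}(2\pi i)^{\Kcal(\kb+\lb)}b_{\kb,\lb}\xb^\kb\yb^\lb$ with real $b_{\kb,\lb}$ by exhibiting a symmetry of $\Phrhat$ (equivalently of $\exp(\sum I_n)$) under $(\xb,\yb)\mapsto(-\xb,-\yb)$ combined with complex conjugation: since $\Phrhat(\xb,\yb) = \E[e^{2\pi i(\xb\cdot\ub+\yb\cdot\vb)}]$ is the Fourier transform of a real measure, $\overline{\Phrhat(\xb,\yb)} = \Phrhat(-\xb,-\yb)$. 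Transporting this through \eqref{eqn Phrhat InsT}—using that $Q_T$ is real on real $(\xb,\yb)$ and that the $O$-term, being a remainder of the same type, respects the symmetry—one gets $\overline{\exp(\sum_n I_n(\zb))} = \exp(\sum_n I_n(-\zb))$, hence $\overline{G(\xb,\yb)} = G(-\xb,-\yb)$. Comparing Taylor coefficients: $\overline{(2\pi i)^{\Kcal(\kb+\lb)}b_{\kb,\lb}} = (-1)^{\Kcal(\kb+\lb)}(2\pi i)^{\Kcal(\kb+\lb)}b_{\kb,\lb}$, and since $\overline{(2\pi i)^N} = (-2\pi i)^N = (-1)^N(2\pi i)^N$, this yields $\overline{b_{\kb,\lb}} = b_{\kb,\lb}$. (Alternatively one can verify $\overline{C_{j_1,j_2}} = C_{j_2,j_1}$ from Lemma \ref{lemma asymp InsT} and an analogous conjugation-symmetry of each $B_{1/2}(\kb,\lb)$ implies the product $\prod I_n$ expansions have conjugate-symmetric coefficients; I would use whichever is shorter in the write-up.)

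For the bound \eqref{bkl bound} I would use Cauchy's estimates. Fix $\delta_3$ with $0<\delta_3<\frac{\pi}{\sqrt J}\delta_2$. On the polydisc $|x_j|,|y_j|\le \delta_3/(2\pi)$ we have $\|\zb\|^2 = \sum_j(x_j^2+y_j^2) \le 2J(\delta_3/(2\pi))^2 = \frac{J\delta_3^2}{2\pi^2} < \frac{J}{2\pi^2}\cdot\frac{\pi^2}{J}\delta_2^2 = \frac{\delta_2^2}{2}$, so certainly $\|\zb\|\le\delta_2$ and \eqref{Inzb bound 1} applies, giving $\big|\sum_{n\ge 2}I_n(\zb)\big| \le \sum_{n\ge 2}2^{-n} = \tfrac12$, whence $|G(\xb,\yb)| = |\exp(\sum_n I_n)| \le e^{1/2} = \sqrt e$ throughout this polydisc. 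Cauchy's inequality then gives $|(2\pi i)^{\Kcal(\kb+\lb)}b_{\kb,\lb}|\cdot(\delta_3/(2\pi))^{\Kcal(\kb+\lb)} \le \sqrt e$, i.e. $(2\pi)^{\Kcal(\kb+\lb)}|b_{\kb,\lb}|\cdot(\delta_3/(2\pi))^{\Kcal(\kb+\lb)} = \delta_3^{\Kcal(\kb+\lb)}|b_{\kb,\lb}| \le \sqrt e$, which is exactly \eqref{bkl bound}.

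The step I expect to require the most care is the reality claim: one must be careful that the error term in \eqref{eqn Phrhat sum 1}/\eqref{def bkl} does not corrupt the exact symmetry—but since $G$ is \emph{defined} to be precisely $\sum_{r\ge0}\frac1{r!}(\sum_{n\ge2}I_n)^r$ with the $I_n=I_{n,1/2}$ independent of $T$, the $O$-terms never enter the definition of $b_{\kb,\lb}$ at all, and the symmetry one actually needs is the conjugation symmetry of the $B_{1/2}(\kb,\lb)$ (equivalently of $R_{p,1/2}(\zb)$), which follows from $\overline{A_{p,1/2}(\kb,\lb)} = A_{p,1/2}(\lb,\kb)$ — itself immediate from the definition \eqref{def apskl} since complex conjugation swaps $g_{j,p}$ and $\overline{g_{j,p}}$. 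I would therefore present the reality proof at the level of $R_{p,1/2}$ and $B_{1/2}$ rather than through $\Phrhat$, to keep the argument self-contained and free of analytic subtleties about remainders.
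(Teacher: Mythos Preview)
Your overall strategy coincides with the paper's: read off $b_{0,0}=1$ and the vanishing at degree $1$ from the generating series, obtain reality from the conjugation symmetry $\overline{A_{p,\sigma}(\kb,\lb)}=A_{p,\sigma}(\lb,\kb)$ (which is exactly \eqref{Apskl conj} in the paper), and get \eqref{bkl bound} via Cauchy's integral formula on a polydisc of radius $\delta_3/(2\pi)$ in the $2J$ variables $x_1,\ldots,y_J$. The paper uses precisely this polydisc (it sets $\delta_3'=\delta_3/(2\pi)$) and the same $\sqrt{e}$ bound on $|G|$.

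There is, however, one genuine gap in your Cauchy-estimate step. To extract Taylor coefficients by Cauchy's formula you must regard $G(\xb,\yb)$ as a holomorphic function of \emph{complex} $x_1,\ldots,y_J$, and you need $|G|\le\sqrt{e}$ on the \emph{complex} polydisc $|x_j|,|y_j|\le\delta_3/(2\pi)$. But \eqref{Inzb bound 1} is stated and proved only for real $(\xb,\yb)$: the computation ``$\|\zb\|^2=\sum_j(x_j^2+y_j^2)\le 2J(\delta_3/(2\pi))^2$'' is meaningless as a norm inequality once $x_j,y_j$ are complex (note that for complex $x_j,y_j$ one has $z_j=x_j+iy_j$ and $\bar z_j=x_j-iy_j$ with $|\bar z_j|\neq|z_j|$ in general, so neither $\|\zb\|$ nor the inequality $|\bar{\zb}^{\kb}\zb^{\lb}|\le\|\zb\|^{\Kcal(\kb+\lb)}$ carries over). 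The paper is careful about this: instead of quoting \eqref{Inzb bound 1}, it redoes the bound on the complex polydisc $\sup_j\{|x_j|,|y_j|\}\le\delta_2/(2\sqrt{J})$ directly from \eqref{eqn instzb 1} and Lemma~\ref{lem fs}, using $|z_j|,|\bar z_j|\le|x_j|+|y_j|\le\delta_2/\sqrt{J}$ to get $|I_n(\zb)|\le(\delta_2 C)^n\le 2^{-n}$ for $n\ge3$, and bounding $|I_2(\zb)|$ separately from \eqref{def I2zb}. Your argument becomes correct once you insert this re-derivation; as written, the invocation of \eqref{Inzb bound 1} on the complex polydisc is unjustified.
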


See Section \ref{sec proof lem bkl} for a proof. 
  The infinite sum over $\kb, \lb$ in \eqref{def bkl} can be approximated by its partial sum. We shall prove a quantitative version. Let $ \epsilon>0$.
By \eqref{def bkl} and \eqref{Inzb bound 1}  we have
\begin{align*}
   \bigg|  \sum_{ \Kcal(\kb+ \lb) > \epsilon \log \log T } ( 2 \pi i )^{\Kcal( \kb+ \lb)} b_{\kb, \lb} \xb^\kb \yb^\lb  \bigg|   & \leq  \sum_{r=1}^\infty \frac{1}{ r!} \sum_{ \substack{  n_1 , \ldots , n_r \geq 2 \\   n_1 + \cdots + n_r > \epsilon \log \log T}   }    \bigg( \frac12 \bigg)^{ n_1 + \cdots + n_r }  \\
   & \leq  \sum_{r=1}^\infty \frac{1}{ r!} \sum_{m>  \epsilon \log \log T } \frac{1}{2^m}   \sum_{ \substack{  n_1 , \ldots , n_r \geq 2 \\   n_1 + \cdots + n_r =m}   }     1  
   \end{align*}
   for $|| \zb || \leq \delta_2 $.
We substitute $ n_j$ by  $ n'_j + 2  $ for $ j = 1, \ldots , r$ in the last sum, then the last sum equals to the number of nonnegative integers $n'_1, \ldots, n'_r$ such that $ n'_1 + \ldots + n'_r = m-2r$, which equals to $ { m-r-1 } \choose {r-1 }$. Thus, the above sum is 
 \begin{align*}
  & \leq  \sum_{r=1}^\infty \frac{1}{ r!} \sum_{m>  \epsilon \log \log T } \frac{1}{2^m} { { m-r-1 } \choose {r-1 }}  \leq \sum_{r=1}^\infty \frac{1}{ r!} \sum_{m>  \epsilon \log \log T } \frac{1}{2^m} \frac{ m^{r-1}}{(r-1)! }\\
  & \leq \sum_{m>  \epsilon \log \log T } \frac{1}{2^m} \sum_{n=0 }^\infty   \frac{ m^{n}}{(n!)^2 }    \leq \sum_{m>  \epsilon \log \log T } \frac{1}{2^m} \bigg(  \sum_{n=0 }^\infty   \frac{ \sqrt{m}^{n}}{n! } \bigg)^2 = \sum_{m>  \epsilon \log \log T } \frac{e^{2 \sqrt{m}}}{2^m}   \\
  & \leq \sum_{m > \epsilon \log \log T} \bigg( \frac23 \bigg)^m \leq 3 \bigg( \frac23 \bigg)^{\epsilon \log \log T} \ll \frac{ 1}{ ( \log T)^\kappa}
   \end{align*}
 with a constant $\kappa  \leq \epsilon \log \frac32$. It follows from these estimates, \eqref{eqn Phrhat sum 1}, \eqref{def bkl} and Lemma \ref{lem bkl} we obtain the following proposition.
\begin{pro}\label{prop 2}
Let $\delta_2 $ be the constant defined in \eqref{def delta2}. Let $ \kappa $ and $\epsilon$ be constants such that $0 < \kappa < \theta $ and $\kappa  \leq \epsilon \log \frac32$. Let $\{ b_{\kb, \lb } \} $ be a sequence of real numbers defined by its generating series \eqref{def bkl}. Then  
$$ \Phrhat ( \xb, \yb ) = e^{Q_T (\zb)}  \bigg(   \sum_{\Kcal(\kb+ \lb)\leq \epsilon \log \log T } ( 2 \pi i )^{\Kcal( \kb+ \lb)} b_{\kb, \lb} \xb^\kb \yb^\lb      + O\bigg(  \frac{1 }{ ( \log T)^\kappa } \bigg) \bigg) $$
holds for $ || \zb || \leq \delta_2 $.
\end{pro}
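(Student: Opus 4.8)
The plan is to read the proposition off the identity \eqref{eqn Phrhat sum 1}, the generating-series definition \eqref{def bkl}, the uniform bound $|I_n(\zb)|\le 2^{-n}$ from \eqref{Inzb bound 1}, and Lemma \ref{lem bkl}; no new analytic input is needed. Working throughout on the polydisc $\|\zb\|\le\delta_2$ (so that all the series below converge absolutely), I would first observe that the inner double series $\sum_{r=0}^\infty\frac1{r!}\bigl(\sum_{n=2}^\infty I_n(\zb)\bigr)^r$ occurring in \eqref{eqn Phrhat sum 1} is exactly $G(\xb,\yb)=\sum_{\kb,\lb}(2\pi i)^{\Kcal(\kb+\lb)}b_{\kb,\lb}\xb^\kb\yb^\lb$ by \eqref{def bkl}; the rearrangement into a power series in $\xb,\yb$ is legitimate since $|I_n(\zb)|\le 2^{-n}$ on $\|\zb\|\le\delta_2$. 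Thus
\[
\Phrhat(\xb,\yb)=e^{Q_T(\zb)}\Bigl(\sum_{\kb,\lb}(2\pi i)^{\Kcal(\kb+\lb)}b_{\kb,\lb}\xb^\kb\yb^\lb+O\Bigl(\frac{\log\log T}{(\log T)^\theta}\Bigr)\Bigr)
\]
for $\|\zb\|\le\delta_2$.

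Second, I would split the power series at $\Kcal(\kb+\lb)=\epsilon\log\log T$ and estimate the tail exactly as in the computation preceding the statement: using $|I_n(\zb)|\le 2^{-n}$, the contribution of the terms with $\Kcal(\kb+\lb)>\epsilon\log\log T$ is bounded by $\sum_{r\ge1}\frac1{r!}\sum_{n_1,\dots,n_r\ge2,\ n_1+\cdots+n_r>\epsilon\log\log T}2^{-(n_1+\cdots+n_r)}$; grouping by $m=n_1+\cdots+n_r$ and using that the number of such compositions is $\binom{m-r-1}{r-1}\le m^{r-1}/(r-1)!$, one sums over $r$ to reach $\sum_{m>\epsilon\log\log T}2^{-m}e^{2\sqrt m}\le\sum_{m>\epsilon\log\log T}(2/3)^m\ll(2/3)^{\epsilon\log\log T}=(\log T)^{-\epsilon\log(3/2)}\le(\log T)^{-\kappa}$, the last inequality using $\kappa\le\epsilon\log(3/2)$. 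This bound is uniform in $\zb$ on $\|\zb\|\le\delta_2$, so the tail there is $O((\log T)^{-\kappa})$.

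Finally, I would merge the error terms. Since $0<\kappa<\theta$, we have $\frac{\log\log T}{(\log T)^\theta}\ll\frac1{(\log T)^\kappa}$, so the two errors combine into a single $O((\log T)^{-\kappa})$ inside the parentheses; and since $Q_T(\zb)=-\pi^2\sum_{j}\psi_{j,T}|z_j|^2\le 0$ by \eqref{def QTzb}, we have $e^{Q_T(\zb)}\le 1$, so multiplying through by $e^{Q_T(\zb)}$ preserves the error bound. This yields the asserted formula, and the fact that the $b_{\kb,\lb}$ are real with $b_{0,0}=1$ and $b_{\kb,\lb}=0$ when $\Kcal(\kb+\lb)=1$ is exactly the content of Lemma \ref{lem bkl}.

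The step I expect to matter most is not an obstacle so much as a bookkeeping point: one must check that the implied constants in \eqref{eqn Phrhat sum 1} and in the tail estimate are uniform over the whole polydisc $\|\zb\|\le\delta_2$. This is automatic, because the inputs — the bound $|I_n(\zb)|\le 2^{-n}$ valid for all $\|\zb\|\le\delta_2$ and the error term of Lemma \ref{lemma asymp InsT} that feeds \eqref{eqn Phrhat sum 1} — are themselves uniform on that set. All of the genuine work (Lemmas \ref{lemma Rpst}, \ref{lemma asymp InsT}, \ref{lem bkl}) is imported, so the remaining argument is purely assembly.
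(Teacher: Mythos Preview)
Your proposal is correct and follows essentially the same approach as the paper: the paper's proof of Proposition~\ref{prop 2} is precisely the assembly of \eqref{eqn Phrhat sum 1}, the generating-series identity \eqref{def bkl}, the tail estimate (which it carries out in the paragraph immediately preceding the proposition, via the same $\sum_r\frac{1}{r!}\sum_{n_1+\cdots+n_r>\epsilon\log\log T}2^{-(n_1+\cdots+n_r)}$ computation you describe), and Lemma~\ref{lem bkl}. Your explicit remarks that $e^{Q_T(\zb)}\le 1$ and that the implied constants are uniform on $\|\zb\|\le\delta_2$ are good housekeeping the paper leaves implicit.
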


 We are ready to prove Proposition \ref{prop}. 
The density function $H_T ( \ub, \vb) $ of the measure $\Phrand$ is the inverse Fourier transform of $ \Phrhat$, so that
$$ H_T ( \ub, \vb) = \int_{\mathbb{R}^{J}}\int_{\mathbb{R}^{J}}  \Phrhat ( \xb, \yb ) e^{ - 2 \pi i ( \xb \cdot \ub + \yb \cdot \vb ) } d \xb d \yb . $$
Let $\delta_4$ be a constant such that $ 0 < \delta_4 \leq \min\{ \delta_2 ,  \frac{ \delta_3}{ 4 \pi }\} $. 
By Lemma 7.1 and (7.14) in \cite{LL} we find that
$$ H_T ( \ub, \vb) = \iint_{ || \zb || \leq \delta_4 } \Phrhat ( \xb, \yb ) e^{ - 2 \pi i ( \xb \cdot \ub + \yb \cdot \vb ) } d \xb d \yb  + O \bigg(  \frac{1}{ ( \log T)^\kappa} \bigg)$$
for some $\kappa > 0 $. See the proof of \cite[Lemma 7.4]{LL} for a detail.

By Proposition \ref{prop 2} we have
$$ H_T ( \ub, \vb) =   \sum_{\Kcal(\kb+ \lb) \leq \epsilon \log \log T }  ( 2 \pi i )^{\Kcal(\kb + \lb)}  b_{\kb, \lb }  \iint_{ || \zb || \leq \delta_4 } e^{Q_T (\zb)  - 2 \pi i ( \xb \cdot \ub + \yb \cdot \vb ) }    \xb^\kb  \yb^\lb      d \xb d \yb  + O \bigg(  \frac{1}{ ( \log T)^\kappa} \bigg) $$
for some $ \epsilon, \kappa >0$.
Let $\xi_{min} = \min_{j \leq J} \xi_j > 0 $, then we have
\begin{align*}
\bigg|  \iint_{ || \zb || \geq \delta_4 } & e^{Q_T (\zb)  - 2 \pi i ( \xb \cdot \ub + \yb \cdot \vb ) }    \xb^\kb  \yb^\lb      d \xb d \yb \bigg|    \leq  \iint_{ || \zb || \geq \delta_4 } e^{ -  \pi^2 \xi_{min} \theta \log \log T || \zb ||^2  }   || \zb ||^{\Kcal( \kb+\lb)}      d \xb d \yb   \\
& \ll  \int_{\delta_4}^\infty e^{ -  (\pi^2 \xi_{min} \theta \log \log T ) r^2  }   r^{\Kcal( \kb+\lb) + 2J-1 } dr    \\
& \ll   \frac{1}{ (\pi^2 \xi_{min} \theta \log \log T )^{ \frac{ \Kcal ( \kb + \lb )}{2} + J  } }  \int_{ \pi \delta_4 \sqrt{ \xi_{min} \theta \log \log T }}^\infty e^{ -    r^2  }   r^{\Kcal( \kb+\lb) + 2J-1 } dr  
\end{align*}
by the change of variables to the polar coordinates.
By the Cauchy-Schwarz inequality we have
$$ \int_X^\infty e^{-r^2 } r^M dr \leq \sqrt{ \int_X^\infty e^{-r^2 }rdr \int_0^\infty e^{-r^2} r^{2M-1} dr } = \frac{ \sqrt{ (M-1)!}}{2} e^{- \frac12 X^2 } . $$
Hence, it follows from  Lemma \ref{lem bkl} and the above estimations   that 
\begin{align*}
  H_T ( \ub, \vb) = &    \sum_{\Kcal(\kb+ \lb) \leq \epsilon \log \log T }  ( 2 \pi i )^{\Kcal(\kb + \lb)}  b_{\kb, \lb }  \int_{\mathbb{R}^J }\int_{\mathbb{R}^J } e^{Q_T (\zb)  - 2 \pi i ( \xb \cdot \ub + \yb \cdot \vb ) }    \xb^\kb  \yb^\lb      d \xb d \yb  \\
  &+ O \bigg( \frac{1}{  ( \log T)^{  \frac12 \pi^2 \delta_4^2 \xi_{min} \theta }}  \sum_{\Kcal(\kb+ \lb) \leq \epsilon \log \log T }  \bigg(   \frac{ 2 \pi  }{\delta_3 } \bigg)^{\Kcal(\kb + \lb)} \frac{\sqrt{ (\Kcal(\kb + \lb ) + 2J-2)!   } }{(\pi^2 \xi_{min} \theta \log \log T )^{ \frac{ \Kcal ( \kb + \lb )}{2} + J  } }  \bigg)  \\
  & + O \bigg(  \frac{1}{ ( \log T)^\kappa} \bigg) .
  \end{align*}
By Stirling's formula the $\kb, \lb$-sum in the above $O$-term is 
\begin{align*}
& \ll  \sum_{\Kcal(\kb+ \lb) \leq \epsilon \log \log T }  \bigg(   \frac{ 2 \pi  }{\delta_3 } \bigg)^{\Kcal(\kb + \lb)} \frac{1 }{(\pi^2 \xi_{min} \theta \log \log T )^{ \frac{ \Kcal ( \kb + \lb )}{2} + J  } }  \bigg(  \frac{ 2 \epsilon \log \log T}{e} \bigg)^{ \frac{ \Kcal(\kb + \lb)}{2} + J - \frac34}\\ 
& \ll  \sum_{ \kb , \lb  }  \bigg(   \frac{ 2 \sqrt{2 \epsilon}  }{\delta_3 \sqrt{ \xi_{min} \theta e  }    } \bigg)^{\Kcal(\kb + \lb)}   \leq  \sum_{\kb, \lb  }  \bigg( \frac12  \bigg)^{\Kcal(\kb + \lb)} = 2^{2J},  
\end{align*}
provided that $    0< \epsilon \leq \frac{1}{ 32} \delta_3^2 \xi_{min} \theta e $.
 With this choice of $\epsilon$, we have
\begin{align*}
  H_T ( \ub, \vb) = &    \sum_{\Kcal(\kb+ \lb) \leq \epsilon \log \log T }  ( 2 \pi i )^{\Kcal(\kb + \lb)}  b_{\kb, \lb }  \int_{\mathbb{R}^J }\int_{\mathbb{R}^J } e^{Q_T (\zb)  - 2 \pi i ( \xb \cdot \ub + \yb \cdot \vb ) }    \xb^\kb  \yb^\lb      d \xb d \yb  \\ 
  & + O \bigg(  \frac{1}{ ( \log T)^\kappa} \bigg)  
  \end{align*}  
for some $\kappa>0$

It remains to calculate the above integral. We first write it as  repeated integrals
\begin{align*}
 &  \int_{\mathbb{R}^J }\int_{\mathbb{R}^J } e^{Q_T (\zb)  - 2 \pi i ( \xb \cdot \ub + \yb \cdot \vb ) }  \xb^\kb \yb^\lb  d \xb d \yb \\
 & =   \prod_{j=1}^J  \int_{\mathbb{R}} \int_{\mathbb{R}} e^{  -  \psi_{j,T} \pi^2 ( x_j^2 + y_j^2 )   - 2 \pi i ( x_j u_j  + y_j v_j  ) }  x_j^{k_j} y_j^{\ell_j}  dx_j  d y_j \\
 & =   \prod_{j=1}^J   \int_{\mathbb{R}} e^{  -  \psi_{j,T}  \pi^2 x_j^2    - 2 \pi i   x_j u_j    }  x_j^{k_j}   dx_j    \int_{\mathbb{R}}   e^{  -  \psi_{j,T} \pi^2  y_j^2    - 2 \pi i  y_j v_j }   y_j^{\ell_j}    d y_j.
 \end{align*}
Each integral can be written in terms of the Hermite polynomials defined in \eqref{def Hcal}. Since
\begin{align*}
 \int_\mathbb{R}  e^{ -  \psi \pi^2  x^2   - 2 \pi i   x u    } x^{k}   dx  &= \frac{1}{(- 2 \pi i )^{k}}\frac{d^k }{d u^k } \int_\mathbb{R}  e^{ - \psi \pi^2   x^2   - 2 \pi i   x u    }    dx  \\
 & = \frac{1}{(- 2 \pi i )^{k}}\frac{d^k }{d u^k } \frac{1}{ \sqrt{  \pi  \psi  }} e^{-  \frac{   u^2  }{\psi }} \\
 & = \frac{1}{( 2 \pi i )^{k} \sqrt{ \pi}  \sqrt{ \psi }^{k+1}    }    e^{-  \frac{  u^2  }{\psi  }}   \Hcal_k \bigg( \frac{  u }{ \sqrt{ \psi  }} \bigg)  , 
\end{align*}  
  we have
\begin{align*}
& \int_{\mathbb{R}^{J} }\int_{\mathbb{R}^{J} } e^{Q_T (\zb)  - 2 \pi i ( \xb \cdot \ub + \yb \cdot \vb ) }    \xb^\kb   \yb^\lb    d \xb d \yb \\
&  =   \prod_{j =1}^J   \frac{1}{ \pi ( 2 \pi i )^{k_j+\ell_j }   \sqrt{ \psi_{j, T}}^{k_j + \ell_j +2}    }    e^{-  \frac{  u_j ^2  + v_j^2  }{\psi_{j,T} }}   \Hcal_{k_j}  \bigg( \frac{  u_j }{ \sqrt{ \psi_{j,T } }} \bigg)  \Hcal_{\ell_j}  \bigg( \frac{  v_j }{ \sqrt{ \psi_{j,T } }} \bigg)  . 
\end{align*}
Thus, we have
\begin{align*}
  H_T ( \ub, \vb) = &    \sum_{\Kcal(\kb+ \lb) \leq \epsilon \log \log T }    b_{\kb, \lb }     \prod_{j =1}^J   \frac{1}{ \pi    \sqrt{ \psi_{j, T}}^{k_j + \ell_j +2}    }    e^{-  \frac{  u_j ^2  + v_j^2  }{\psi_{j,T} }}   \Hcal_{k_j}  \bigg( \frac{  u_j }{ \sqrt{ \psi_{j,T } }} \bigg)  \Hcal_{\ell_j}  \bigg( \frac{  v_j }{ \sqrt{ \psi_{j,T } }} \bigg) \\ 
  & + O \bigg(  \frac{1}{ ( \log T)^\kappa} \bigg)  
  \end{align*}  
for some $\epsilon, \kappa>0$. This completes the proof of Proposition \ref{prop}.

\section{Proofs of lemmas}\label{sec 3}
We prove  Lemma \ref{lemma Rpst} in Section \ref{sec proof lemma Rpst}, Lemma \ref{lemma asymp InsT} in Section \ref{sec proof lemma asymp InsT} and Lemma \ref{lem bkl} in Section \ref{sec proof lem bkl}. In the proofs, we need the inequalities
\begin{equation}\label{beta bound 0}
  |\beta_{L_j}(p^k)| \leq \frac{d}{k}   p^{k\eta}  \quad \mathrm{for~} k \geq 1,
  \end{equation}
\be\label{beta bound 1} 
   | \beta_{L_j } (p^k) | \leq    \frac1k  \sum_{i=1}^d  | \alpha_{j,i}(p)|^k \leq \frac{p^{(k-2)\eta}}{k} \sum_{i=1}^d  |\alpha_{j,i} (p)|^2  \quad \mathrm{for~} k \geq 2 
 \ee 
 and  
\be\label{beta bound 2}
| \beta_{L_j } (p ) |^2  \leq    \bigg(  \sum_{i=1}^d  | \alpha_{j,i}(p)| \bigg)^2    \leq d  \sum_{i=1}^d  |\alpha_{j,i} (p)|^2     ,
 \ee
 which follows by \eqref{eqn Ljpk} and assumpion A1.

\subsection{Proof of Lemma \ref{lemma Rpst}}\label{sec proof lemma Rpst}

 By \eqref{def gjps} and \eqref{beta bound 0}  there is a constant $ C_1 := C_{1, d, \eta} >0$ such that 
\begin{equation}\label{gjp bound 1}
 | g_{j,p} (  \sigma_T  ) |  \leq \sum_{k=1}^\infty \frac{d}{k}\frac{ p^{k\eta}}{ p^{\frac{k}{2}}}  \leq \frac{C_1 }{ p^{\frac12 - \eta}  }
\end{equation}
for every prime $p$ and $j = 1, \ldots, J$. 
By \eqref{def apskl}, \eqref{def Rpstzb} and \eqref{gjp bound 1} we obtain
$$  |R_{p, \sigma_T }(\zb) |  \leq   \sum_{\kb \neq 0}  \sum_{\lb \neq 0 }   \frac{1}{\kb! \lb!}    \bigg( \pi || \zb ||  \frac{C_1}{ p^{\frac12 - \eta}  } \bigg)^{\Kcal( \kb + \lb )}  =   \bigg( \exp \bigg( J \frac{  \pi C_1 ||\zb ||}{ p^{\frac12 - \eta}} \bigg) -1 \bigg)^2.$$
Thus, there exists a constant $C_2 := C_{2, d, J, \eta}>0 $ such that
$$ |R_{p, \sigma_T } ( \zb)| \leq  \frac{ C_2 }{ p^{1-2\eta}} || \zb ||^2 \leq \frac{ C_2 }{ 2^{1-2\eta}} || \zb ||^2 $$
for $ || \zb || \leq 1 $ and every prime $p$. Therefore, there exists a constant $\delta_1 >0$ such that
$$ | R_{ p, \sigma_T } ( \zb) | \leq \frac12 $$
for  $ ||\zb ||\leq \delta_1 $ and every prime $p$.

\subsection{Proof of Lemma \ref{lemma asymp InsT}}\label{sec proof lemma asymp InsT}

We first find an useful expression  
\begin{multline}\label{eqn instzb 1}
I_{n, \sigma  } ( \zb ) = ( \pi i )^n  \sum_{1 \leq m \leq    n/2    } \frac{ (-1)^{m-1}}{m} \sum_{   \substack{  \kb_1 , \ldots , \kb_m , \lb_1 , \ldots, \lb_m  \neq 0 \\ \Kcal( \kb_1 + \cdots + \kb_m + \lb_1 + \cdots + \lb_m ) = n }}  \frac{    \overline{\zb}^{\kb_1 + \cdots + \kb_m} \zb^{ \lb_1 + \cdots + \lb_m }}{ \kb_1 ! \cdots \kb_m ! \lb_1 ! \cdots \lb_m ! } \\
\times \sum_p A_{p, \sigma  } ( \kb_1 , \lb_1 ) \cdots A_{p, \sigma  } ( \kb_m , \lb_m )
 \end{multline}
 by \eqref{def Rpstzb}, \eqref{def Bskl} and \eqref{def Inszb}.
Here, the sum over $m$ is  $ 1 \leq m \leq    n/2    $ because 
$$ n= \Kcal( \kb_1 + \cdots + \kb_m + \lb_1 + \cdots + \lb_m )  \geq 2m $$
for   $\kb_1 , \ldots , \kb_m , \lb_1 , \ldots, \lb_m  \neq 0$.

The asymptotic \eqref{eqn I2sTzb asymp} of $I_{2, \sigma_T }( \zb)$ is known before. See (7.16) of \cite[Lemma 7.3]{LL}.  We next prove 
\begin{equation}\label{eqn Cj1j2 conj}
 \overline{  C_{j_1, j_2} } = C_{j_2, j_1}.
 \end{equation}
  We have  
  \begin{equation}\label{Apskl conj}
   \overline{ A_{p, \sigma } ( \kb, \lb) } = A_{p, \sigma } ( \lb, \kb) 
\end{equation}
 by \eqref{def apskl}. By   \eqref{eqn instzb 1} we also have
  \begin{equation}\label{I2sz conj}
   \overline{ I_{2, \sigma } ( \zb )  } =    I_{2, \sigma } ( \zb )   . 
   \end{equation}
So we obtain \eqref{eqn Cj1j2 conj} by  \eqref{eqn I2sTzb asymp} and \eqref{I2sz conj}.

For the case $n>2 $,  we observe that $ A_{p, \sigma} ( \kb, \lb)$ for a real $\sigma $ can be extended to an analytic function in a complex variable $s$ via 
  \begin{equation}\label{def Apskl}
 A_{ p, s  } (\kb, \lb ) = \E \bigg[  \prod_{j=1}^J \bigg(   \sum_{k=1 }^\infty  \frac{  \beta_{L_j} ( p^k ) \X(p)^k }{ p^{k s  }}    \bigg)^{k_j }  \bigg(   \sum_{k=1 }^\infty  \frac{ \overline{ \beta_{L_j} ( p^k )} \overline{ \X(p)^k} }{ p^{k s  }}   \bigg)^{\ell_j } \bigg] . 
 \end{equation}
  This observation essentially leads us to prove  the following lemma.
\begin{lem}\label{lem fs}
Let $\eta $ be the constant in assumption A1 and assume $ \Kcal ( \kb_1 + \cdots + \kb_m + \lb_1 + \cdots + \lb_m  )  = n   \geq 3$. The Dirichlet series
$$
 f(s) := \sum_p A_{p, s } ( \kb_1 , \lb_1 ) \cdots A_{p, s} ( \kb_m , \lb_m ) 
$$
 is absolutely convergent for $\re(s) \geq  \frac{ 5+2 \eta}{12} $. Moreover,  there exists a constant $C_3= C_{ 3, J, d, \eta} >0  $ such that 
  $$ |f(s) | \leq C_3^n  $$
 for $ \re(s) \geq \frac{5 + 2 \eta}{12} $ and 
$$   |   f( \sigma_T ) - f( \tfrac12 )   | \leq  \frac{ C_3^n }{ ( \log T)^\theta}  . $$
  \end{lem}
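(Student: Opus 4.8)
The plan is to prove the three claims of Lemma~\ref{lem fs} --- absolute convergence for $\re(s) \geq \frac{5+2\eta}{12}$, the bound $|f(s)| \leq C_3^n$, and the difference estimate $|f(\sigma_T) - f(1/2)| \leq C_3^n/(\log T)^\theta$ --- by first obtaining good pointwise bounds on each local factor $A_{p,s}(\kb_i,\lb_i)$. The key observation is that since each $\kb_i, \lb_i \neq 0$, expanding the product in \eqref{def Apskl} and taking the expectation over $\X(p)$ (which kills all terms except those where the total power of $\X(p)$ equals the total power of $\overline{\X(p)}$) forces a balance condition; in particular, when $\Kcal(\kb_i) \neq \Kcal(\lb_i)$ the factor has extra decay, and in all cases the leading contribution in $p$ comes from using $\beta_{L_j}(p)$ as much as possible. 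Using \eqref{beta bound 1}, \eqref{beta bound 2} and assumption A3, I would show there is an absolute constant $c>0$ such that for $\re(s) \geq \frac12$,
$$ |A_{p,s}(\kb_i,\lb_i)| \leq \frac{c^{\Kcal(\kb_i+\lb_i)}}{\kb_i! \lb_i!}\Big( \text{something like } \frac{S_p}{p^{2\re(s)}} + \frac{1}{p^{3\re(s)-\eta}} \Big)^{\Kcal(\kb_i+\lb_i)/2 \text{ or so}}, $$
where $S_p := \sum_{i=1}^d |\alpha_{j,i}(p)|^2 \ll p^{1+\epsilon}$ on average. The honest version of this is: $|A_{p,s}(\kb_i,\lb_i)| \ll c^{n_i}/(\kb_i!\lb_i!) \cdot p^{-(\re(s)-\eta)n_i}$ crudely via \eqref{beta bound 0}, but for convergence we need to exploit that when $n_i = \Kcal(\kb_i+\lb_i)$ is built from $\beta_{L_j}(p^k)$'s with $k\geq 2$ we gain, and when built from $\beta_{L_j}(p)$'s the $\X(p)$-average forces pairing so that $|\beta_{L_j}(p)|^2$ appears, giving $S_p/p^{2\re(s)}$ per pair.

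Concretely, the main steps are as follows. First, fix $m$ with $1 \leq m \leq n/2$ and the vectors $\kb_1,\ldots,\kb_m,\lb_1,\ldots,\lb_m$. For the product $A_{p,s}(\kb_1,\lb_1)\cdots A_{p,s}(\kb_m,\lb_m)$, I would bound it as a sum over all ways of choosing, for each of the $\Kcal(\kb_1+\cdots+\kb_m)$ holomorphic slots and $\Kcal(\lb_1+\cdots+\lb_m)$ antiholomorphic slots, an exponent $k \geq 1$; the $\X(p)$-average is nonzero only when the multiset of holomorphic exponents and antiholomorphic exponents have equal sum. The term where every chosen exponent equals $1$ contributes, after pairing via the balance condition, a quantity $\ll (S_p^{1/2} p^{-\re(s)})^{n} \cdot (\text{combinatorial factor})$ where $S_p = \max_j \sum_i |\alpha_{j,i}(p)|^2$; since $n \geq 3$ we have at least three factors of $S_p^{1/2}p^{-\re(s)}$, and writing $S_p^{1/2} \leq S_p$ for the summability we get roughly $S_p^{n/2} p^{-n\re(s)}$, which we then need to be summable against a suitable exponent. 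The terms with some exponent $\geq 2$ are smaller by a factor $p^{-\re(s)(k-1)+(k-2)\eta}$-ish per such slot by \eqref{beta bound 1}. Summing the geometric-type series in the exponents (bounded by something like $(C_1 p^{-(\re(s)-\eta)})^{\text{total}}$) gives a clean bound of the shape $|A_{p,s}(\kb_1,\lb_1)\cdots A_{p,s}(\kb_m,\lb_m)| \ll c^n \big( S_p^{1/2} p^{-\re(s)} + p^{-2\re(s)+2\eta}\big)^{n}/(\kb_1!\cdots\lb_m!)$ up to harmless combinatorial adjustments. With $\re(s) \geq \frac{5+2\eta}{12}$ one checks $\sum_p \big(S_p^{1/2}p^{-\re(s)}\big)^3 < \infty$ using A3 and partial summation (since $3\cdot\frac12 \cdot(1+\epsilon) - 3\cdot\frac{5+2\eta}{12} = \frac{3}{2} - \frac{5}{4} - \cdots > 1$ for small $\epsilon$ and $\eta < \frac12$; this is exactly why $\frac{5+2\eta}{12}$ appears), and the same for the other term; hence $f(s)$ converges absolutely and summing over $m$ and over the finitely-many-for-each-$n$ vectors with total $n$ yields $|f(s)| \leq C_3^n$.

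For the difference estimate, I would use that $A_{p,s}(\kb_i,\lb_i)$ is analytic in $s$ and write $f(\sigma_T) - f(1/2) = \int_{1/2}^{\sigma_T} f'(s)\,ds$, so it suffices to bound $|f'(s)|$ for $\frac12 \leq \re(s) \leq \sigma_T$; since $\sigma_T - \frac12 = (\log T)^{-\theta}$, a bound $|f'(s)| \leq C_3^n$ (uniform on that segment, which lies well inside the region of absolute convergence) gives the result. Differentiating term by term is legitimate by the uniform absolute convergence just established; $\frac{d}{ds}A_{p,s}(\kb_i,\lb_i)$ brings down a factor $-(\log p)\times(\text{sum of exponents})$, i.e. at most $\ll n \log p$, and $\log p$ is absorbed by shrinking the convergence exponent slightly (replace $\frac{5+2\eta}{12}$-critical estimate with a tiny bit of room, which is available since $\eta < \frac12$), at the cost of enlarging $C_3$. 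The main obstacle I anticipate is the first step: getting the local bound on $A_{p,s}(\kb_i,\lb_i)$ in the precise form that (a) is summable at the stated abscissa $\frac{5+2\eta}{12}$ rather than something worse like $\frac12$, which requires genuinely using the $\X(p)$-balance to convert bare $|\beta_{L_j}(p)|$'s into $|\beta_{L_j}(p)|^2 \ll S_p$'s and then invoking A3 with partial summation, and (b) carries the factorials $1/(\kb_i!\lb_i!)$ through so that summing over all the index vectors of total weight $n$ produces only $C_3^n$ and not something growing faster --- this bookkeeping, while routine in spirit, is where the $c^n$-type constants must be tracked carefully.
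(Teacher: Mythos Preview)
Your proposal overcomplicates the argument and contains a key arithmetic error in the convergence check. You propose to exploit the balance condition coming from $\E[\X(p)^a\overline{\X(p)}^b]=\delta_{a,b}$ in order to pair up $|\beta_{L_j}(p)|$'s into $S_p:=\sum_i|\alpha_{j,i}(p)|^2$, arriving at a local bound of shape $(S_p^{1/2}p^{-\re(s)}+\cdots)^n$. But then you would need to sum $S_p^{n/2}p^{-n\re(s)}$ over primes, and A3 controls only $\sum_{p\le x}S_p$, not $\sum_{p\le x}S_p^{3/2}$; to proceed you must reinsert the pointwise bound $S_p\le dp^{2\eta}$ anyway, at which point the pairing has gained nothing. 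Your numerical check ``$\tfrac32-\tfrac54-\cdots>1$'' is simply wrong and does not verify convergence at the stated abscissa.

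The paper's proof is much more direct. It \emph{discards} the $\X(p)$-expectation entirely, using the trivial bound
\[
|A_{p,s}(\kb,\lb)|\le\Big(\sum_{k\ge1}\max_j|\beta_{L_j}(p^k)|/p^{k\re(s)}\Big)^{\Kcal(\kb+\lb)},
\]
so that $|f(s)|\le\sum_p(\sum_{k\ge1}\cdots)^n$. The inner sum is split via $(a+b)^n\le 2^n(a^n+b^n)$ into the $k=1$ piece and the $k\ge2$ tail. For the $k=1$ piece, one extracts a \emph{single} factor of $|\beta_{L_j}(p)|^2\le dS_p$ and bounds the remaining $n-2$ copies by $(dp^\eta)^{n-2}$, giving a summand $\ll d^{n-1}S_p\,p^{(n-2)\eta-n\re(s)}$; with $n\ge3$ and $\re(s)\ge\tfrac{5+2\eta}{12}$ the exponent is $\le-(1+\varepsilon)$ with $\varepsilon=\tfrac14-\tfrac\eta2$, and now A3 plus partial summation applies directly to $\sum_p S_p p^{-1-\varepsilon}$. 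The $k\ge2$ tail is handled similarly via \eqref{beta bound 1}. This yields $|f(s)|\le C_4^n$ with no combinatorics and no factorials to track.

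For the difference $f(\sigma_T)-f(\tfrac12)$, the paper avoids termwise differentiation: since $f$ is analytic and bounded by $C_4^n$ on $\re(s)\ge\tfrac{5+2\eta}{12}$, Cauchy's integral formula on a disc of radius $\varepsilon_1=\tfrac12-\tfrac{5+2\eta}{12}$ gives $|f'(u)|\le C_4^n/\varepsilon_1$ for $u\in[\tfrac12,\sigma_T]$, and integrating over that segment of length $(\log T)^{-\theta}$ finishes the job. Your termwise-differentiation plan, while workable, would force you to redo the convergence estimate with an extra $\log p$ and a smaller safety margin; the Cauchy route is both cleaner and automatic once the uniform bound $C_4^n$ is in hand.
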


\begin{proof}
 We first show that there is a constant $ C_4  >0$ such that 
$$  |f(s) | \leq C_4^n $$
  for $ \re ( s )  \geq  \frac{ 5+2 \eta}{12}$.
By \eqref{def Apskl} we find that
$$  | A_{p, s} ( \kb, \lb ) |   \leq \bigg(    \sum_{k=1}^\infty   \frac{   \max_{j\leq J } | \beta_{ L_j } ( p^k ) | }{ p^{k\re(s) }} \bigg)^{\Kcal( \kb+\lb) } . $$
 Thus, we have
\begin{align}
| f(s)|   & \leq  \sum_p \bigg(    \sum_{k=1}^\infty   \frac{  \max_{j\leq J } | \beta_{ L_j } ( p^k ) | }{ p^{k\re(s)}} \bigg)^n  \notag  \\
& \leq 2^n \sum_p  \bigg(    \frac{   \max_{j\leq J } | \beta_{ L_j } ( p  ) |   }{ p^{ \re(s)}} \bigg)^n + 2^n \sum_p \bigg( \sum_{k=2}^\infty   \frac{  \max_{j\leq J } | \beta_{ L_j } ( p^k ) | }{ p^{k\re(s)}} \bigg)^n  \label{fs sum}.
\end{align}

The first sum on the right hand side of \eqref{fs sum} is
\begin{align*}
 \sum_p      \frac{  \big( \max_{j\leq J }  | \beta_{ L_j } ( p  ) | \big)^n  }{ p^{ n \re(s) }}&  \leq \sum_p      \frac{  (dp^\eta)^{n-2}  \big( \max_{ j \leq J} d  \sum_{i=1}^d | \alpha_{j,i } ( p  ) |^2 \big)   }{ p^{ n \re(s) }}  \\
% & \leq  d^{n-1}   \sum_p      \frac{ \sum_{j=1}^J \sum_{i=1}^d | \alpha_{j,i } ( p  ) |^2    }{ p^{ n \re(s) - \eta(n-2) }}  \\
   & \leq  d^{n-1} \sum_p      \frac{    \sum_{j=1}^J \sum_{i=1}^d  | \alpha_{j,i } ( p  ) |^2     }{ p^{1+\varepsilon}} \leq C_5^n  
\end{align*}
for $ \re(s) \geq \frac{5 + 2 \eta}{12} $  by \eqref{beta bound 0} and \eqref{beta bound 2}, where $ \varepsilon =  \frac14 - \frac{\eta}{2} >0 $ and 
$$ C_5  := \max\bigg\{  d ,   \sum_p      \frac{   \sum_{j=1}^J \sum_{i=1}^d  | \alpha_{j,i } ( p  ) |^2      }{ p^{1+\varepsilon}} \bigg\} . $$
Note that the last $p$-sum is convergent by assumption A4 and a partial summation.
 The second sum on the right hand side of \eqref{fs sum} is
\begin{align*}
\sum_p \bigg( \sum_{k=2}^\infty   \frac{  \max_{ j \leq J} | \beta_{ L_j } ( p^k ) | }{ p^{k\re(s) }} \bigg)^n &  \leq  \sum_p \bigg( \sum_{k=2}^\infty   \frac{  \max_{ j \leq J}  \sum_{i=1}^d | \alpha_{j,i}(p)  |^2  }{ k  p^{k\re(s) -(k-2)\eta  }} \bigg)^n \\
 &  \leq  \sum_p \bigg(    \frac{  \max_{ j \leq J}  \sum_{i=1}^d | \alpha_{j,i}(p)  |^2  }{   p^{2\re(s)  }}   \frac{1}{2 }   \frac{1}{ 1- \frac{1}{ p^{\re(s) - \eta}}} \bigg)^n \\
 &  \leq \bigg( \frac{1}{2 }   \frac{1}{ 1- \frac{1}{ p^{\frac{5}{12}(1-2\eta) }}} \bigg)^n \sum_p     \frac{  (d p^{2 \eta} )^{n-1} \max_{ j \leq J}  \sum_{i=1}^d | \alpha_{j,i}(p)  |^2  }{   p^{2n \re(s)  }}    \\
 %&  \leq \bigg( \frac{1}{2 }   \frac{1}{ 1- \frac{1}{ p^{\frac{5}{12}(1-2\eta) }}} \bigg)^n d^{n-1}  \sum_p     \frac{    \sum_{j=1}^J   \sum_{i=1}^d | \alpha_{j,i}(p)  |^2  }{   p^{2n \re(s)  - 2 \eta (n-1)  }}     \\
 &  \leq \bigg( \frac{1}{2 }   \frac{1}{ 1- \frac{1}{ p^{\frac{5}{12}(1-2\eta) }}} \bigg)^n d^{n-1}  \sum_p     \frac{    \sum_{j=1}^J   \sum_{i=1}^d | \alpha_{j,i}(p)  |^2  }{   p^{1+6 \varepsilon  }} \leq C_6^n    
\end{align*}
for $ \re(s) \geq \frac{5 + 2 \eta}{12} $ by   \eqref{beta bound 1}, where
$$ C_6 := \frac{1}{2 }   \frac{1}{ 1- \frac{1}{ p^{\frac{5}{12}(1-2\eta) }}}   \max\bigg\{  d ,  \sum_p    \frac{    \sum_{j=1}^J \sum_{i=1}^d | \alpha_{j,i}(p)  |^2     }{ p^{1+6 \varepsilon }}  \bigg\} . $$
We choose $C_4 = 2 (C_5 + C_6) $, then we have
\begin{equation}\label{fsC4 bound}
  |f(s) | \leq C_4^n  
\end{equation}
 for $ \re(s) \geq \frac{5 + 2 \eta}{12} $. One can easily see in the above estimations that $f(s)$ is absolutely convergent for $ \re(s) \geq \frac{5 + 2 \eta}{12} $.

  Let $ \varepsilon_1 = \frac12  - \frac{5 + 2 \eta}{12} >0 $. Since
 \begin{align*}
 f(\sigma_T ) - f( \tfrac12 ) = \int_{1/2}^{\sigma_T}  f'(u) du =  \int_{1/2}^{\sigma_T} \frac{1}{ 2 \pi i } \int_{ | z - u | = \varepsilon_1}  \frac{ f(z)}{ ( z-u)^2 } dz du ,
 \end{align*}
we obtain
 \begin{equation}\label{eqn fsf12}
| f(\sigma_T ) - f( \tfrac12 ) | \leq  ( \sigma_T - \frac12)   \frac{1}{ \varepsilon_1  } \sup_{ \re(z) \geq \frac12 - \varepsilon_1 }  | f(z) |  \leq     \frac{C_4^n }{ \varepsilon_1 ( \log T)^\theta} 
 \end{equation}
 by \eqref{fsC4 bound}. Let $ C_3  =   C_4 / \varepsilon_1 > C_4 $, then \eqref{fsC4 bound} and \eqref{eqn fsf12} imply  both inequalities in the lemma.
\end{proof}

Therefore by Lemma \ref{lem fs}, \eqref{eqn instzb 1} and Stirling's formula we have
\begin{align*}
  |I_{n,\sigma} ( \zb )|  &  \leq  ||\zb||^n    ( \pi   C_3)^n \sum_{m \leq n/2} \frac{1}{m} \sum_{ \Kcal(\kb_1 + \cdots + \kb_m + \lb_1 + \cdots + \lb_m ) = n }    \frac{  1 }{ \kb_1 ! \cdots \kb_m ! \lb_1 ! \cdots \lb_m !  }   \\
  &= ||\zb||^n    ( \pi   C_3)^n   \sum_{m \leq n/2} \frac{1}{m}      \frac{  (2mJ)^n }{n!}  \\
    & \leq ||\zb||^n    ( J \pi   C_3)^n   \frac{ n^n}{n!} \\
  & \leq ||\zb||^n    ( J \pi   C_3 e)^n 
  \end{align*}
for $\sigma \geq \frac{ 5+2 \eta}{12}  $  and $n > 2 $.
Similarly, we have
$$ |  I_{n , \sigma_T } ( \zb ) -  I_{n , 1/2 } ( \zb) | \leq   \frac{  ||\zb||^n    ( J \pi   C_3 e)^n  }{   ( \log T)^\theta}   $$
for  $ n >  2 $. Therefore, Lemma \ref{lemma asymp InsT} holds with a constant  
\begin{equation}\label{def C}
 C= J \pi C_3 e . 
\end{equation}

\subsection{Proof of Lemma \ref{lem bkl}}\label{sec proof lem bkl}

We first consider $G( \xb, \yb)$ in \eqref{def bkl} as a function in complex variables $x_1, \ldots, x_J , y_1, \ldots , y_J$. We replace $ x_j$ by $ \frac{x_j}{ 2 \pi i }$ and $ y_j $ by $\frac{y_j}{ 2 \pi i }$ for $ j = 1 , \ldots , J $ in \eqref{def bkl}, then we obtain that 
\begin{equation}\label{def bkl 2}
    \sum_{\kb, \lb }   b_{\kb, \lb} \xb^\kb \yb^\lb  =   \sum_{r=0}^\infty \frac{1}{ r!}  \bigg(    \sum_{ n=2}^\infty        I_{n  } ( \zb)  ( 2 \pi i )^{-n}  \bigg)^r  .
    \end{equation}
Now we consider $x_1, \ldots, x_J , y_1, \ldots , y_J$ as real variables.  By \eqref{eqn instzb 1} and \eqref{Apskl conj} we have
$$   \overline{ I_{n, \sigma } ( \zb )  (2 \pi i)^{-n} } =    I_{n, \sigma } ( \zb ) ( 2 \pi i )^{-n}  , $$
which  implies that  $I_{n, \sigma} ( \zb ) ( 2 \pi i )^{-n} $ is a polynomial in real variables $ x_1, \ldots, x_J$, $ y_1, \ldots ,y_J$ with real coefficients.  Since $ I_n ( \zb ) ( 2 \pi i )^{-n}$ is also a homogeneous polynomial in $x_1 , \ldots, x_J$, $y_1 , \ldots , y_J$ of degree $n$ with real coefficients, we obtain by comparing coefficients in \eqref{def bkl 2}  that $b_{\kb, \lb} \in \mathbb{R}$, $ b_{0,0}=1 $ and $ b_{\kb, \lb} = 0$ for $\Kcal(\kb  + \lb ) = 1 $. 

It remains to prove the inequality \eqref{bkl bound}. Again we consider $G(\xb, \yb) $ defined in \eqref{def bkl} as an analytic function in complex variables $ x_1 , \ldots , x_J, y_1, \ldots, y_J$. Assume that 
$$ \sup \{ |x_1| , \ldots, |x_J| , |y_1 | , \ldots , |y_J | \} \leq \frac{ \delta_2 }{ 2 \sqrt{J}} . $$
Then we see that 
$$ |  I_2 ( \zb) | \leq \sum_{j_1, j_2 =1}^J  | C_{j_1, j_2}|   \frac{\delta_2^2}{4J} \leq \frac1{16} $$
by \eqref{def I2zb} and \eqref{def delta2}. For $ n \geq 3 $ we have
\begin{align*}
  |I_{n} ( \zb )|  &  \leq   \bigg(  \frac{ \delta_2\pi   C_3}{ \sqrt{J}} \bigg)^n   \sum_{m \leq n/2} \frac{1}{m} \sum_{ \Kcal(\kb_1 + \cdots + \kb_m + \lb_1 + \cdots + \lb_m ) = n }    \frac{  1 }{ \kb_1 ! \cdots \kb_m ! \lb_1 ! \cdots \lb_m !  }   \\
  & \leq    (  \delta_2 \sqrt{J} \pi   C_3 e)^n \leq ( \delta_2 C)^n  \leq 2^{-n}
  \end{align*}
 by \eqref{def Inzb}, \eqref{def delta2}, \eqref{eqn instzb 1}, \eqref{def C}  and Lemma \ref{lem fs}. Thus,
 $$|  G(\xb, \yb) |   \leq \sum_{r=0}^\infty \frac{1}{r!} \bigg( \sum_{n=2}^\infty |I_n ( \zb)| \bigg)^r \leq  \sum_{r=0}^\infty \frac{1}{r!} 2^{-r} = \sqrt{e} . $$
 Let $ 0 < \frac{ \delta_3 }{ 2 \pi } = \delta'_3 <  \frac{ \delta_2 }{ 2 \sqrt{J}} $. Since  
$$   b_{\kb, \lb} = \frac{1}{ ( 2 \pi i )^{\Kcal( \kb+ \lb)+ 2J } }    \int_{ | x_1|= \delta'_3} \cdots \int_{ |x_J| =\delta'_3 } \int_{ |y_1 | = \delta'_3} \cdots \int_{ | y_J | = \delta'_3 }  \frac{ G( \xb, \yb)}{  \xb^\kb \yb^\lb}    \frac{ dy_J}{y_J} \cdots \frac{ dy_1}{y_1} \frac{ dx_J}{x_J} \cdots \frac{ dx_1}{ x_1} $$
by Cauchy's integral formula, we obtain
$$ | b_{\kb , \lb} | \leq    \frac{\sqrt{e} }{ (2 \pi  \delta'_3)^{\Kcal(\kb+\lb)}} =  \frac{\sqrt{e} }{ \delta_3^{\Kcal(\kb+\lb)}}.$$

\section*{Acknowledgements}

This work has been supported by the National Research Foundation of Korea (NRF) grant funded by the Korea government (MSIP) (No. 2019R1F1A1050795).

\end{document}